\numberwithin{equation}{section}
\newtheorem{thm}{Theorem}[section]
\newtheorem{lem}[thm]{Lemma}
\newtheorem{prop}[thm]{Proposition}
\newtheorem{ass}[thm]{Assumption}
\newtheorem{ques}[thm]{Question}
\theoremstyle{remark}
\newtheorem{rmk}[thm]{Remark}
\newtheorem{ex}[thm]{Example}
\newcommand{\cD}{{\mathcal D}}
\newcommand{\CC}{{\mathbb C}}
\newcommand{\FF}{{\mathbb F}}
\newcommand{\QQ}{{\mathbb Q}}
\newcommand{\ZZ}{{\mathbb Z}}
\newcommand{\mm}{{\mathfrak m}}
\newcommand{\pp}{{\mathfrak p}}
\newcommand{{\D}}{{\mathscr{D}_{X}}} %SHEAF OF DIFFERENTIAL OPERATORS%
\title[Continuity of differential operators for nonarchimedean Banach algebras]{Continuity of differential operators for nonarchimedean Banach algebras} %TYTUŁ%
\author{Feliks Rączka}
\address{Institute of Mathematics, University of Warsaw, ul.\ Banacha 2, 02-097, Warsaw, Poland}
  \address{Institute of Mathematics, Polish Academy of Sciences, ul.\ Śniadeckich 8,
    \newline\indent 00-656 Warsaw, Poland
  }
\email{fraczka@impan.pl}
\date{\today}
\begin{document}

 %Data??%

\begin{abstract}
Given a nonarchimedean field $K$ and a commutative, noetherian, Banach $K$-algebra $A$, we study continuity of $K$-linear differential operators (in the sense of Grothendieck) between finitely generated Banach $A$-modules. When $K$ is of characteristic zero we show that every such operator is continuous if and only if $A/\mm$ is a finite extension of $K$ for every maximal ideal $\mm\subset A$.
\end{abstract}

\maketitle

\section{Introduction}\label{Introduction}

A classical result in nonarchimedean functional analysis asserts that if $A$ is a noetherian Banach algebra  over some non-trivially valued nonarchimedean field $K$, then every finitely generated $A$-module has a unique structure of a Banach $A$-module and every homomorphism of finitely generated $A$-modules is continuous. Since homomorphisms may be regarded as differential operators (in the sense of Grothendieck, which we recall in Section \ref{Differential preliminaries}) of order zero, it is natural to pose the following.
\begin{ques}
    Let $A$ be a commutative, noetherian Banach $K$-algebra over some nonarchimedean, non-trivially valued field $K$. Under which algebraic conditions imposed on $A$ is it true that every $K$-linear differential operator between every two finitely generated $A$-modules is continuous?
\end{ques}

\noindent
In this paper we give a complete answer to the above question when $\textnormal{char }K=0$.

\begin{thm}\label{MainThm1}
Let $K$ be a non-trivially valued nonarchimedean field of characteristic zero and let $A$ be a commutative, noetherian, Banach $K$-algebra. The following conditions are equivalent:
\begin{enumerate}

\item For any two finitely generated $A$-modules $M,\ N$, every $K$-linear differential operator $P:M\to N$ is continuous with respect to the unique Banach $A$-module topologies on $M$ and $N$.

\item $[A/\mm:K]<\infty$ for every maximal ideal $\mm\subset A$.

\end{enumerate}
Moreover, the implication (2)$\implies$(1) remains true when $\textnormal{char }K=p>0$.
\end{thm}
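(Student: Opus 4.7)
The proof splits into two implications.

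For $(2)\Rightarrow(1)$, which should work in any characteristic, the plan is to reduce the continuity question to the classical theorem for $A$-linear maps via Grothendieck's principal parts formalism. Recall that $K$-linear differential operators of order $\leq n$ from $M$ to $N$ are in bijection with $A$-linear homomorphisms $\mathcal{P}^n_{A/K}\otimes_A M\to N$, where $\mathcal{P}^n_{A/K}$ is the module of $n$-th order principal parts. The key step is to show that, under (2), $\mathcal{P}^n_{A/K}$ is finitely generated as an $A$-module; by induction on $n$ this reduces to showing that $\Omega^1_{A/K}$ is finitely generated, which one expects to follow from the interplay between the finiteness of the residue fields (forcing that the fraction field of any integral quotient has transcendence degree equal to the Krull dimension, so the $K$-derivations are controlled by finitely many topological generators) and the Banach structure. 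Once this is established, $\mathcal{P}^n_{A/K}\otimes_A M$ is a finitely generated $A$-module with unique Banach topology, and the associated $A$-linear map to $N$ is continuous by the classical theorem recalled in the introduction, yielding continuity of $P$.

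For $(1)\Rightarrow(2)$ in characteristic zero, I argue by contraposition. Assume $\mm\subset A$ is maximal with $[A/\mm:K]=\infty$. Since $\mm$ is closed in $A$ (as a finitely generated ideal in a noetherian Banach algebra), $L:=A/\mm$ is a Banach $K$-algebra, hence a Banach field extension of $K$ of infinite dimension. Invoking the classical fact that a complete nonarchimedean algebraic extension of a nontrivially valued complete field must be finite-dimensional, we conclude that $L$ contains elements transcendental over $K$; moreover a Baire-category argument shows that the transcendence degree of $L$ over $K$ is uncountable. In characteristic zero, $L/K$ is separable and admits a separating transcendence basis $B\subset L$. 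Choose distinct $t,s\in B$ and define a $K$-derivation $D:L\to L$ by $D(t)=0$, $D(s)=1$, $D(b)=0$ for $b\in B\setminus\{t,s\}$, extended to $K(B)$ by Leibniz and uniquely to $L$ by separability. The composite $P:=D\circ\pi$, with $\pi:A\twoheadrightarrow L$ the projection, is a $K$-linear differential operator of order $\leq 1$, since $[P,a](m)=\pi(m)\cdot D(\pi(a))$ is $A$-linear in $m$; it takes values in the cyclic $A$-module $L$. As $\pi$ is continuous and surjective while $D$ will turn out to be discontinuous, $P$ fails to be continuous, contradicting (1).

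The \textbf{main obstacle} is verifying that $D$ is genuinely discontinuous. Although $D$ vanishes on $K(t)$, continuity would only force it to vanish on the Banach closure $\overline{K(t)}\subset L$; the discontinuity therefore reduces to the assertion that $s\in B$ can be arranged to lie outside $\overline{K(t)}$. The plan here is to establish a transcendence-degree bound on the closed subfield $\overline{K(t)}\subset L$ --- intuitively, $\overline{K(t)}$ has ``small'' transcendence degree over $K$ compared to the uncountable basis $B$, so ample choices of $s\in B\setminus\overline{K(t)}$ remain available. Making this bound rigorous, together with the analogous step of proving finite generation of $\Omega^1_{A/K}$ in the $(2)\Rightarrow(1)$ direction, are the places I anticipate the most technical care will be required.
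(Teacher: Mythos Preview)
Both directions of your proposal contain genuine gaps.

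\medskip

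\textbf{On $(2)\Rightarrow(1)$.} The reduction to finite generation of $\Omega^1_{A/K}$ does not work: for the most basic examples satisfying (2) --- Tate algebras such as $A=K\langle x\rangle$ --- the algebraic K\"ahler module $\Omega^1_{A/K}$ is \emph{not} finitely generated. Indeed, the fraction field of $K\langle x\rangle$ has uncountable transcendence degree over $K$ (e.g.\ suitable exponentials of $x$ are transcendental over $K(x)$), so $\Omega^1_{A/K}$ has uncountable rank at the generic point. Only the \emph{completed} differentials are finite, and those do not corepresent arbitrary (possibly discontinuous) $K$-derivations, so the principal-parts bijection you want to invoke breaks down precisely at the point where continuity is in question. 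The paper proceeds entirely differently: it uses automatic-continuity machinery (the separating space $\mathfrak{S}(\varphi)$, the continuity ideal $\mathfrak{C}(\varphi)$, and the Jewell--Sinclair stabilization lemma) together with noetherianity to show directly that a discontinuous operator of minimal order forces the existence of a maximal ideal $\mm$ with $[A/\mm:K]=\infty$.

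\medskip

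\textbf{On $(1)\Rightarrow(2)$.} You correctly identify the obstacle, but your proposed fix --- bounding the transcendence degree of $\overline{K(t)}$ and then choosing $s\in B\setminus\overline{K(t)}$ --- cannot succeed as stated. The closure $\overline{K(t)}$ can itself have uncountable transcendence degree over $K$: already the closure of $K[x]$ inside $K\langle x\rangle$ is all of $K\langle x\rangle$. So there is no reason $\overline{K(t)}$ should avoid any particular $s\in B$, and it may even equal $L$. The paper sidesteps this by working \emph{inside} the closure rather than outside it: after reducing to the case where $K$ is integrally closed in $L$, it takes a transcendental $x\in L$, forms $\exp(x)\in\overline{K(x)}$, and checks (via a differential-algebra argument) that $\exp(x)$ is transcendental over $K(x)$; then any $K(x)$-linear derivation sending $\exp(x)$ to $1$ is discontinuous because it vanishes on $K(x)$ but not on a limit point of $K(x)$.
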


\noindent
In fact, we prove more than is stated above. Theorem \ref{MainThm1} is implied by the following, more precise statement.

\begin{thm}\label{MainThm2}
Let $K$ be a non-trivially valued nonarchimedean field of characteristic zero, let $A$ be a commutative, noetherian, Banach $K$-algebra, and let $M,N$ be two finitely generated (Banach) $A$-modules. The following conditions are equivalent:
\begin{enumerate}

\item There exists a discontinuous $K$-linear differential operator $P:M\to N$.

\item There exists a maximal ideal $\mm\subset A$ such that $\mm\in\textnormal{Supp}(M)\cap\textnormal{Ass}_{A}(N)$ and $[A/\mm:K]=\infty$.
\end{enumerate}
Moreover, the implication (1)$\implies$(2) remains true when $\textnormal{char }K=p>0$.
\end{thm}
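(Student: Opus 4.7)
\emph{Direction $(2)\Rightarrow(1)$.}
Let $\mm$ be as in (2). Since $A$ is a noetherian Banach $K$-algebra, every ideal is finitely generated and hence closed, so $L:=A/\mm$ inherits the structure of a complete valued field extension of $K$. Writing $L$ as an ascending union of its finite-dimensional $K$-subspaces and invoking the Baire category theorem shows that $[L:K]=\infty$ forces $L/K$ to have positive transcendence degree. I would then construct a discontinuous $K$-derivation $d:L\to L$: pick a transcendental $t\in L$ and extend $d/dt$ from $K(t)$ along a transcendence basis of $L$; because $L/K$ is transcendental, the $L$-module of continuous $K$-derivations of $L$ is a proper submodule of $\textnormal{Der}_{K}(L,L)$, and the extension can be arranged to be discontinuous. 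Finally, $\mm\in\textnormal{Ass}_{A}(N)$ furnishes an $A$-linear embedding $\iota:L\hookrightarrow N$, while $\mm\in\textnormal{Supp}(M)$ together with Nakayama gives a nonzero surjection $\sigma:M\twoheadrightarrow M/\mm M\simeq L^{r}$. The composition
\[
P:\ M\xrightarrow{\sigma} L^{r}\xrightarrow{\pi_{1}} L\xrightarrow{d} L\xrightarrow{\iota} N
\]
is a first-order $K$-linear differential operator — the commutator $[a,d\circ\pi_{1}]$ equals the $A$-linear map $(x_{1},\dots,x_{r})\mapsto -d(\bar{a})x_{1}$ — and it is discontinuous because $\sigma$ is a continuous open surjection, $\iota$ is a topological embedding, and $d$ itself is discontinuous.

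\emph{Direction $(1)\Rightarrow(2)$, via the contrapositive.}
Assume every maximal $\mm\in\textnormal{Supp}(M)\cap\textnormal{Ass}_{A}(N)$ has $[A/\mm:K]<\infty$, and let $P:M\to N$ be a $K$-linear differential operator of order $\le n$. A primary decomposition $0=\bigcap Q_{i}$ of $N$ yields a closed-image $A$-linear embedding $N\hookrightarrow\bigoplus_{i}N/Q_{i}$, which by the open mapping theorem for Banach modules is a topological embedding; continuity of $P$ therefore reduces to continuity of each component, and we may assume $\textnormal{Ass}_{A}(N)=\{\pp\}$. Expanding the order condition yields that $ax=0$ implies $a^{n+1}P(x)=0$ for all $a\in A$ and $x\in M$. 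If $\pp\notin\textnormal{Supp}(M)$, each $x$ is killed by some $a\notin\pp$, which is a non-zero-divisor on $N$, forcing $P=0$. If $\pp$ is maximal, then $\pp^{L}N=0$ for some $L$; the same identity forces $P(\pp^{L+n}M)=0$, so $P$ factors through $M/\pp^{L+n}M$, and the hypothesis $[A/\pp:K]<\infty$ makes $A/\pp^{L+n}$ — and hence $M/\pp^{L+n}M$ — a finite-dimensional $K$-Banach space, so $P$ is automatically continuous.

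The remaining case of non-maximal $\pp$ is the principal technical obstacle: the hypothesis places no constraint on $[A/\pp:K]$, which may be infinite (consider $A=K\langle T\rangle$, $\pp=0$), so the dimensional argument above fails. I would address it by analyzing $P$ through its reductions $M\xrightarrow{P}N\twoheadrightarrow N/\mm^{k}N$ at the maximal ideals $\mm\supset\pp$ and deducing continuity from these residual operators via a Jacobson-style approximation argument — reconstructing elements of $N$ from their images at closed points in the spirit of the evaluation-at-rational-points argument available for Tate algebras. Constructing the specifically discontinuous $K$-derivation in $(2)\Rightarrow(1)$ is the other nontrivial input and is the reason the reverse implication is restricted to $\textnormal{char}\,K=0$.
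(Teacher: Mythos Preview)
Your direction $(2)\Rightarrow(1)$ follows the paper's outline but skips two genuine difficulties. First, $L=A/\mm$ is a Banach $K$-algebra which is a field, but its norm need \emph{not} be equivalent to a multiplicative one (Kedlaya gives counterexamples), so ``complete valued field'' is too optimistic. Second, Baire only rules out \emph{countable} infinite dimension; it does not show that an infinite complete extension is transcendental. The paper proves this separately (algebraic and complete over $K$ implies finite, using a Krasner-type argument on root distances), and then constructs a discontinuous derivation by a case analysis: either the integral closure $K_{1}$ of $K$ in $L$ is not complete, in which case any $K_{1}$-derivation hitting an element of $\overline{K_{1}}\setminus K_{1}$ is discontinuous, or $K_{1}/K$ is finite and one manufactures a transcendental limit $\exp(x)\in\overline{K(x)}\setminus K(x)$. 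Your ``arrange the extension to be discontinuous'' hides exactly this work.

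The serious gap is in $(1)\Rightarrow(2)$. You correctly isolate the non-maximal associated prime case as the obstacle, but the proposed Jacobson-style reduction to $N/\mm^{k}N$ cannot work as stated: once you restrict to a coprimary $N$ with $\textnormal{Ass}_{A}(N)=\{\pp\}$ and $\pp$ non-maximal, the hypothesis places \emph{no} constraint on $[A/\mm:K]$ for any $\mm\supset\pp$, so the residual operators are just as uncontrolled as $P$ itself. The paper takes a completely different route, drawn from automatic continuity theory: for any $K$-linear $\varphi:M\to N$ one forms the \emph{separating space} $\mathfrak{S}(\varphi)\subset N$ and the \emph{continuity ideal} $\mathfrak{C}(\varphi)=\{a:\varphi a\text{ continuous}\}$. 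Taking $\varphi$ discontinuous of minimal order $d$ with $\mathfrak{C}(\varphi)$ maximal among such (noetherianity), one shows $\mm:=\mathfrak{C}(\varphi)$ is prime by a direct argument, and equals $\textnormal{Ann}_{A}(\mathfrak{S}(\varphi))$ because $[\varphi,a]$ has order $<d$ hence is continuous. The crucial step is maximality of $\mm$: the Jewell--Sinclair stabilization lemma combined with Nakayama gives, for each $a\in A$, an identity $(1-ab)a^{k}\mathfrak{S}(\varphi)=0$, which in a prime quotient forces $\bar a$ to be zero or a unit. This produces the maximal ideal $\mm\in\textnormal{Ass}_{A}(N)$ directly, without ever confronting a non-maximal coprimary component; your primary-decomposition strategy does not access this mechanism.
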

\begin{ex}[cf. Remark \ref{Tate_vs_Berkovich}]
If, in the context of Theorem \ref{MainThm1}, $A$ is an affinoid $K$-algebra in the sense of Tate, then it is well known that condition (2) always holds and therefore all differential operators are continuous.
\end{ex}
There are at least two motivations to study the above problem. The first one (which we discuss in greater detail in Section \ref{Singer-Wermer section}), is that the analogous problem (without the noetherianity assumption) has been studied very intensively in the classical case of complex Banach algebras. It is therefore interesting to see how the classical results translate to the nonarchimedean setting. In fact, in the proofs we use various tools from the branch of complex functional analysis called \textit{automatic continuity theory}. It turns out that these techniques combined with the noetherianity assumption allow one to prove very strong results about automatic continuity of differential operators. We consider this observation the main contribution of this paper to the field of nonarchimedean functional analysis.

\medskip

The second motivation, which is more pragmatic and also partially justifies the noetherianity assumption, comes from the theory of nonarchimedean $\mathscr{D}$-modules. This theory has been studied classically by many authors (see for example Mebkhout--Narvaez Macarro \cite{Meb2}, and Berthelot \cite{Berthelot} among others) and has seen a very rapid progress in the recent years (see for example the work of Ardakov--Bode--Wadsley \cite{Ardakov1}, \cite{Ardakov2}, \cite{Ardakov3}, and the recent preprint of J.\ E.\ Rodríguez Camargo \cite{Camargo}), and we hope that our results will turn out to be useful tool within the theory, as the possibility to drop the continuity assumption from the definition of a differential operator should simplify proofs of various technical results. Also, since the geometric objects in nonarchimedean geometry are often modeled locally as the adic spectra of noetherian  $K$-Banach algebras, the noetherianity assumption in Theorems \ref{MainThm1} and \ref{MainThm2} is much less restrictive than in the classical setting.
\medskip

\noindent
\textbf{Acknowledgments} This work contains an extended version of some of the results obtained in author's Ph.D thesis. I thank my advisors: P.\ Achinger and A.\ Langer, for their help and support. I also thank the referees of the thesis: W.\ Gajda, G.\ Kapustka, and J.\ Poineau, for their helpful comments.
\medskip

\noindent
This work was supported by the project KAPIBARA funded by the European Research Council (ERC) under the European Union's Horizon 2020 research and innovation programme (grant agreement No 802787).  In the period of May--June 2024 I was also supported by the program ``II konkurs na przygotowanie rozpraw doktorskich zgodnych z problematyką POB'' which is a part of the ``Inicjatywa Doskonałości-Uczelnia Badawcza'' program funded by the University of Warsaw.

\section{The Singer--Wermer conjecture and Automatic Continuity}\label{Singer-Wermer section}

To put Theorems \ref{MainThm1} and \ref{MainThm2} and the machinery used in their proofs in a broader context, we now briefly summarize what is known about the continuity of differential operators over complex Banach algebras.
\medskip

In 1955 I.\ M.\ Singer and J.\ Wermer showed in \cite{Singer-Wermer} that if $A$ is a commutative Banach algebra over the complex numbers and $\delta:A\to A$ is a \textit{continuous} $\CC$-linear derivation then the image of $\delta$ is contained in the Jacobson radical of $A$. In the same paper they conjectured that the continuity assumption is superfluous and this so-called \textit{Singer--Wermer conjecture} fueled the research for more that thirty years until  M.\ P.\ Thomas proved it in 1988 in \cite{MP_Thomas} building on the ideas of B.\ E.\ Johnson \cite{Johnson}. It was very natural from the point of view of the Singer--Wermer conjecture to look for Banach algebras with the property that every $\CC$-linear derivation is automatically continuous. For example, by the work of P.\ C.\ Curtis,\ Jr. \cite{Curtis} and B.\ E.\ Johnson \cite{Johnson} every derivation of a (commutative) semi-simple complex Banach algebra is continuous and some other criteria for continuity of derivations where obtained by Bade--Curtis \cite{Bade-Curtis}, A.\ M.\ Sinclair \cite{Sinclair}, and Jewell--Sinclair \cite{Jewell-Sinclair} among others. These results fit into a bigger class of problems which can be described as follows. Given a complex Banach algebra $A$, two Banach $A$-modules $M,$ $N$, and a $\CC$-linear map $T:M\to N$, what are the conditions one can put on $A$, $M$, $N$, and $T$ to force that $T$ is necessarily continuous. Problems of this type and the techniques used to solve them became known under the common name of the \textit{automatic continuity theory}. We refer a reader who interested in a more detailed overview of this theory to A.\ M.\ Sinclair's monograph \cite{SinclairBook}.
\medskip

In the nonarchimedean setting problems of automatic continuity have been considered only to some extent. We refer the reader to the work of M.\ van der Put \cite{vdP}, \cite{VdP_derivations} and W.\ H.\ Schikhof \cite{Schikhof} for some classical results, and to T.\ Mihara \cite{Mihara} for a more recent progress. We remark that among the literature cited above only \cite{VdP_derivations} addresses the problem of continuity of derivations over nonarchimedean fields.

\section{algebraic preliminaries}\label{AlgebraicPleriminariesSection}

In this short section, we very briefly recall the basic constructions from commutative algebra used throughout the text. Let $A$ be a commutative ring and let $M$ be an $A$-module. Then the \textit{support} of $M$ is the set of prime ideals
\begin{equation*}
\textnormal{Supp}(M)=\left\{\pp\in\textnormal{Spec }A:M_{\pp}\neq0\right\}.
\end{equation*}
It is well known that if $M$ is finitely generated, then
\begin{equation*}
\textnormal{Supp}(M)=V\left(\textnormal{Ann}_{A}(M)\right)=\left\{\pp\in\textnormal{Spec }A:\textnormal{Ann}_{A}(M)\subset\pp\right\},
\end{equation*}
where
\[
\textnormal{Ann}_{A}(M)=\left\{a\in A:aM=0\right\}.
\]
We say that a prime ideal $\pp\subset A$ is an \textit{associated prime} of an $A$-module $N$ if there exists $y\in A$ such that 
\[
\pp=\textnormal{Ann}_{A}(y)=\left\{a\in A:ay=0\right\}.
\]
We denote
\begin{equation*}
\textnormal{Ass}_{A}(N)=\{\pp\in\textnormal{Spec }A:\pp \textnormal{ is an associated prime of }N\}.
\end{equation*}
We will need the following easy lemma. Its proof is an exercise in commutative algebra and we leave it to the reader.
\begin{lem}\label{SupCupAss}
Let $A$ be a commutative ring, let $M$ be a finitely generated $A$-module and let $N$ be an $A$-module. Then:
\begin{enumerate}
\item A maximal ideal $\mm\subset A$ is contained in $\textnormal{Supp}(M)$ if and only if there exists a surjective $A$-linear map $M\to A/\mm$.
\item A prime ideal $\pp\subset A$ is contained in $\textnormal{Ass}_{A}(N)$ if and only if there exists an injective $A$-linear map $A/\pp\hookrightarrow N$.

\end{enumerate}
\end{lem}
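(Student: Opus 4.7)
Both parts follow from standard manipulations, but the first requires an application of Nakayama's lemma (and the finite generation hypothesis), while the second is essentially an unwinding of definitions.

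For part (1), the implication ``surjection $\Rightarrow$ support'' is immediate: given a surjective $A$-linear map $M\twoheadrightarrow A/\mm$, localization at $\mm$ is exact and gives a surjection $M_{\mm}\twoheadrightarrow (A/\mm)_{\mm}$. The target is the residue field $A_{\mm}/\mm A_{\mm}$, which is nonzero, so $M_{\mm}\neq0$ and hence $\mm\in\textnormal{Supp}(M)$. For the reverse direction, I would use that $M$ is finitely generated, so $M_{\mm}$ is a finitely generated $A_{\mm}$-module. By Nakayama applied to the maximal ideal $\mm A_{\mm}$, $M_{\mm}\neq 0$ forces $M_{\mm}/\mm M_{\mm}\neq 0$. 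But this is the same as $M/\mm M$ regarded as an $A/\mm$-vector space (localization at $\mm$ acts trivially on an $A/\mm$-module, since elements outside $\mm$ are already units modulo $\mm$). A nonzero vector space over the field $A/\mm$ surjects onto $A/\mm$, and precomposing with the quotient $M\twoheadrightarrow M/\mm M$ produces the desired surjection.

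For part (2), I would argue directly from the definitions. If $\iota\colon A/\pp\hookrightarrow N$ is an injective $A$-linear map, set $y=\iota(1+\pp)\in N$; then $ay=0$ iff $\iota(a+\pp)=0$ iff $a\in\pp$ (using injectivity), so $\textnormal{Ann}_{A}(y)=\pp$, i.e., $\pp\in\textnormal{Ass}_{A}(N)$. Conversely, given $y\in N$ with $\textnormal{Ann}_{A}(y)=\pp$, the $A$-linear map $A\to N$, $a\mapsto ay$, has kernel exactly $\pp$, hence factors as an injection $A/\pp\hookrightarrow N$.

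The only nontrivial point above is the use of finite generation in (1), which enters through Nakayama to convert the condition $M_{\mm}\neq0$ into the production of a concrete quotient onto the residue field; everything else is formal.
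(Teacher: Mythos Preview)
Your argument is correct in both parts. The paper itself does not give a proof of this lemma---it explicitly says ``Its proof is an exercise in commutative algebra and we leave it to the reader''---so there is nothing to compare against; your proposal supplies exactly the kind of standard argument the author had in mind.
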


\section{topological preliminaries}

In this section we fix the notation from the nonarchimedean functional analysis and recall some basic facts concerning finitely generated modules over noetherian Banach algebras. We mostly follow the book \cite{BGR} of Bosch--G{\"u}ntzer--Remmert.
\medskip

Recall from \cite[Definition 1, Section 1.5.1]{BGR} the notion of a nonarchimedean valuation. A \textit{nonarchimedean field} is a field $K$ together with a nonarchimedean valuation $| \ |$, such that $K$ is complete with respect to the metric induced by this valuation. In what follows we prefer to refer to this valuation as a \textit{nonarchimedean norm} on $K$. The simplest example of a nonarchimedean field is to take $K$ to be any field and define the nonarchimedean norm on $K$ to be
\[
|x|=1\qquad \textnormal{for all }x\in K^{\times}.
\]
Such $K$ is called a \textit{trivially valued nonarchimedean field}. In what follows, we want to exclude this example, so for the rest of this note we pose the following assumption.
\begin{ass}\label{Nontrivial_Assumption}
 All nonarchimedean fields in this note are assumed to be non-trivially valued unless stated otherwise.   
\end{ass}
\noindent
Note that $K$ is non-trivially valued if and only if there exists $\pi\in K$ such that $0<|\pi|<1$.
\medskip

Let $K$ be a nonarchimedean field. We refer the reader to \cite[Section 2.8, Section 3.7]{BGR} for the definitions of a Banach $K$-vector space, Banach $K$-algebra $A$, and a Banach $A$-module. 

\begin{rmk}[Notational convention]
For the rest of this note we use the following notational convention to avoid cumbersome notation. Given a nonarchimedean field $K$ we write $| \ |$ for some fixed nonarchimedean norm on $K$. Further, given a Banach $K$-vector space $V$ we also write $|\ |$ for some fixed Banach space norm on $V$ compatible with the one on $K$. Similarly, when $A$ is a Banach $K$-algebra and $M$ is a Banach $A$-module, we write $|\ |$ for both a Banach $K$-algebra norm on $A$ and a Banach module norm on $M$ compatible with the norm on $A$.
\end{rmk}
\begin{rmk}[Banach fields]\label{Kedlaya-Remark}
If $A$ is a Banach $K$-algebra which is also a field, then it is not true in general that $A$ is itself a nonarchimedean field in the sense that the norm of $A$ is equivalent to a multiplicative one, as demonstrated by K.\ Kedlaya in \cite[Example 2.15]{Kedlaya_fields}. This subtlety will cause some technical problems in what follows.
\end{rmk}

Let $V$ be a finitely generated $K$-vector space. A choice of a basis
 \[
 V=\bigoplus_{j=1}^{n}K.e_{j}
 \]
determines a Banach $K$-vector space structure on $V$. The norm is defined as
\[
\left|\sum_{j=1}^{n}a_{j}e_{j}\right|=\max|a_{j}|.
\]
Similarly to the archimedean case we have
\begin{prop}[{\cite[Theorem 1.3.6]{Kedlaya_book}}]\label{FiniteNorms}
Let $K$ be a nonarchimedean field and let $V$ be a finite dimensional $K$-vector space. Then every two nonarchimedean $K$-norms norms on $V$ are equivalent. In particular, $V$ carries the unique Banach $K$-vector space topology.
\end{prop}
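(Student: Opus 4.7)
The plan is to proceed by induction on $n = \dim_K V$. Fix a basis $e_1, \ldots, e_n$ of $V$ and let $|v|_{\max} = \max_j |a_j|$ for $v = \sum_j a_j e_j$ denote the associated sup norm; it suffices to show that an arbitrary nonarchimedean $K$-norm $\|\cdot\|$ on $V$ satisfies two-sided bounds $c_1 |v|_{\max} \leq \|v\| \leq c_2 |v|_{\max}$ for some positive constants. The upper bound is immediate from the strong triangle inequality, with $c_2 = \max_j \|e_j\|$, and does not require induction.

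For the lower bound I would argue by induction on $n$. The base case $n = 1$ is trivial from $K$-homogeneity, since then $\|a e_1\| = |a|\cdot\|e_1\|$. For the inductive step with $n > 1$, suppose toward contradiction that no positive $c_1$ works. Then there exists a sequence $v^{(k)} = \sum_j a_j^{(k)} e_j$ with $|v^{(k)}|_{\max} = 1$ and $\|v^{(k)}\| \to 0$. Since the sup norm attains its maximum at some coordinate, by pigeonhole and relabeling I may pass to a subsequence on which $|a_1^{(k)}| = 1$ for every $k$. Rescaling each $v^{(k)}$ by $(a_1^{(k)})^{-1}$, which has absolute value $1$, yields vectors $w^{(k)} = e_1 + u^{(k)}$ with $u^{(k)} \in W := K e_2 \oplus \cdots \oplus K e_n$ and $\|w^{(k)}\| \to 0$. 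By the inductive hypothesis applied to the $(n-1)$-dimensional space $W$, the restriction of $\|\cdot\|$ to $W$ is equivalent to the sup norm there, under which $W$ is complete (since $K$ is). Hence $(W, \|\cdot\|)$ is complete and therefore closed in $(V, \|\cdot\|)$, so the limit $-e_1 = \lim_k u^{(k)}$ would lie in $W$, contradicting the linear independence of the basis.

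The main obstacle is the normalization step: in the nonarchimedean setting one cannot appeal to compactness of the unit sphere as in the classical proof over $\RR$ or $\CC$. The argument instead exploits the combinatorial fact that the sup norm attains its maximum at some coordinate, combined with $K$-homogeneity of $\|\cdot\|$ to rescale onto an affine hyperplane parallel to $W$. Once this reduction is in place, the rest of the induction is formal and uses only completeness of $K$ to transfer completeness (hence closedness) to $W$. The final assertion that $V$ carries a unique Banach $K$-vector space topology is then a direct consequence of the equivalence of norms.
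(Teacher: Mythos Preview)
The paper does not supply its own proof of this proposition; it merely quotes the result from \cite[Theorem 1.3.6]{Kedlaya_book}. Your argument is correct and is in fact the standard induction proof found in that reference: the upper bound via the ultrametric inequality, and the lower bound by contradiction using completeness of the codimension-one subspace $W$ furnished by the inductive hypothesis. There is nothing to compare against in the paper itself.
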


\noindent
As a corollary we obtain.
\begin{lem}\label{ClosedFiniteDimension}
Let $K$ be a nonarchimedean field and let $V$ be a Banach $K$-vector space. Then every finite dimensional subspace of $V$ is closed.
\end{lem}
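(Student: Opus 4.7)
The plan is to deduce the lemma directly from Proposition \ref{FiniteNorms} together with the standard metric fact that a complete subspace of a metric space is closed. Let $W\subset V$ be a finite dimensional $K$-subspace, and let $|\ |_{V}$ denote the restriction to $W$ of the norm on $V$.

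First, I would fix a basis $e_1,\dots,e_n$ of $W$ and equip $W$ with the associated basis norm $|\sum a_j e_j|_{\mathrm{basis}} = \max_{j}|a_j|$. With this norm $W$ is isometrically isomorphic to $K^{n}$ endowed with the max norm, which is complete because $K$ is complete. Hence $(W,|\ |_{\mathrm{basis}})$ is a Banach $K$-vector space.

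Next, by Proposition \ref{FiniteNorms} the two nonarchimedean norms $|\ |_{V}$ and $|\ |_{\mathrm{basis}}$ on the finite dimensional space $W$ are equivalent. Equivalence of norms preserves completeness, so $(W,|\ |_{V})$ is complete as well. This is really the only nontrivial step, and it is entirely supplied by Proposition \ref{FiniteNorms}; there is no genuine obstacle.

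Finally, I would conclude by the routine observation that any complete subset of a metric space is closed: if $(w_k)\subset W$ is a sequence converging to some $v\in V$, then $(w_k)$ is Cauchy in $V$ and therefore Cauchy in $(W,|\ |_{V})$; by the completeness established above it converges in $W$ to some $w\in W$, and by uniqueness of limits in $V$ we have $w=v\in W$. Thus $W$ is closed in $V$, completing the proof.
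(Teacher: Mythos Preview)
Your argument is correct and follows exactly the paper's approach: reduce closedness to completeness of the restricted norm, and obtain completeness from Proposition \ref{FiniteNorms}. You have simply spelled out in detail what the paper leaves as a one-line observation.
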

\begin{proof}
It suffices to show that every finitely dimensional subspace of $V$ is complete with respect to the norm restricted from $V$. This follows immediately from Proposition \ref{FiniteNorms}.
\end{proof}
\noindent
Let us now turn to the case when $A$ is a commutative Banach $K$-algebra. In this situation the best analogue of Proposition \ref{FiniteNorms} is the following

\begin{prop}\label{Fundamental_Finite_Modules}
Let $A$ be a commutative, noetherian, Banach $K$-algebra over some nonarchimedean field $K$, and let $M$ be a finitely generated $A$-module. Then:
\begin{enumerate}
    \item There exists a unique (up to norm equivalence) Banach $A$-module structure on $M$. We call the corresponding topology on $M$ the \textnormal{canonical topology}.

    \item Every homomorphism of finitely generated $A$-modules is continuous with respect to the canonical topology.

    \item Let $M$ be a finitely generated $A$-module and let $M'\subset M$ be a submodule. Then the canonical topology on $M'$ agrees with the subspace topology. In particular every submodule of $M$ is closed.

    \item Let $\pi:F\to M$ be a surjection of finitely generated $A$-modules. Then $\pi$ is a quotient map with respect to the canonical topology.
\end{enumerate}
\end{prop}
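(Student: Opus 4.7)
The statement is classical in nonarchimedean functional analysis (see \cite[Section 3.7]{BGR}); I outline the strategy.

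The basic computation to establish at the outset is that for any Banach $A$-module $(M,|\cdot|)$ and any elements $m_{1},\ldots,m_{n}\in M$, the $A$-linear map $(a_{1},\ldots,a_{n})\mapsto\sum_{i}a_{i}m_{i}$ from $A^{n}$ (with the maximum norm) to $M$ is continuous; this follows at once from the estimate $\bigl|\sum_{i}a_{i}m_{i}\bigr|\le\max_{i}|m_{i}|\cdot\max_{i}|a_{i}|$ afforded by the Banach module axioms. I would then prove existence in (1): choose generators $m_{1},\ldots,m_{n}$ of $M$ to obtain a surjection $\pi\colon A^{n}\twoheadrightarrow M$; by noetherianity, $I:=\ker\pi$ is a finitely generated $A$-submodule of $A^{n}$. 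The key technical input is that every finitely generated submodule of $A^{n}$ is closed, which by an induction on $n$ reduces to the classical fact that every ideal of a noetherian nonarchimedean Banach $K$-algebra is closed; this closedness statement is the main obstacle in the proposition. Granting it, the quotient norm on $M=A^{n}/I$ makes $M$ into a Banach $A$-module.

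The remaining parts are formal consequences of the nonarchimedean open mapping theorem. For (2), given an $A$-linear map $\phi\colon M\to N$, pick a surjection $\pi\colon A^{n}\twoheadrightarrow M$ as above; the composition $\phi\circ\pi$ is continuous by the initial computation and $\pi$ is a continuous surjection between Banach $K$-vector spaces, hence open by the open mapping theorem. Therefore $\phi$ is continuous. Applying this to $\phi=\mathrm{id}_{M}$ for two Banach $A$-module structures on $M$ yields uniqueness in (1). Part (4) is then immediate, since a surjective continuous linear map between Banach spaces is open.

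Finally, for (3), let $M'\subset M$ be a submodule. By (1) the quotient $M/M'$ carries a canonical Banach topology, and by (2) the projection $M\twoheadrightarrow M/M'$ is continuous; hence $M'$, being the preimage of $\{0\}$ in a Hausdorff space, is closed. The subspace topology on $M'$ is therefore a Banach topology. Applying (2) to the inclusion $M'\hookrightarrow M$ shows that the identity map from $M'$ with its canonical topology to $M'$ with the subspace topology is a continuous $A$-linear bijection between Banach spaces, and the open mapping theorem once more forces this map to be a homeomorphism, so the two topologies coincide.
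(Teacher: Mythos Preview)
Your outline is correct and follows the standard argument. The paper itself does not give a self-contained proof: it simply cites \cite[Propositions 1, 2, and 3, Page 164]{BGR} for parts (1)--(3) and derives (4) from the Banach open mapping theorem. What you have written is essentially an unpacking of that BGR argument, with the same logical structure (closedness of finitely generated submodules of $A^{n}$ as the key technical input, then open mapping for everything else), so the approaches coincide---you have just supplied the details that the paper leaves to the reference.
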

\begin{proof}
Claims (1)-(3) follow from \cite[Propositions 1,2, and 3, Page 164]{BGR}. The last claim follows from the previous ones and the Banach open mapping theorem, since a surjective open map is a quotient map.
\end{proof}
\noindent
Note that as a consequence of (3) above we have the following pleasant property of noetherian Banach $K$-algebras.
\begin{prop}[{\cite[Proposition 2, Page 164]{BGR}}]
Let $A$ be a commutative, noetherian, Banach $K$-algebra. Then every ideal of $A$ is closed.
\end{prop}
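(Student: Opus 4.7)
The plan is to derive this as an immediate corollary of part (3) of Proposition \ref{Fundamental_Finite_Modules}. The ring $A$ is itself a finitely generated $A$-module (generated by $1$), and its canonical topology is (by uniqueness, part (1)) equivalent to the given Banach $K$-algebra topology. Any ideal $I \subset A$ is, by definition, an $A$-submodule of $A$; since $A$ is noetherian, $I$ is even finitely generated, although this is not strictly needed for the application of part (3).

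So concretely, I would argue: fix an ideal $I \subset A$. Viewing $A$ as a finitely generated module over itself and $I$ as a submodule, part (3) of Proposition \ref{Fundamental_Finite_Modules} yields that $I$ is closed in the canonical topology on $A$. Since the canonical topology on $A$ coincides with its Banach $K$-algebra topology (both define Banach $A$-module structures on $A$, hence are equivalent by part (1)), $I$ is closed in the original topology.

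There is no real obstacle here — the work has already been done in Proposition \ref{Fundamental_Finite_Modules}, which in turn cites \cite[Propositions 1,2, and 3, Page 164]{BGR}. The only conceptual point worth stating is the identification of the canonical topology on the module $A$ with the ambient Banach algebra topology, which is forced by the uniqueness clause.
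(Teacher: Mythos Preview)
Your argument is correct and matches the paper's approach exactly: the paper simply remarks that this proposition is ``a consequence of (3) above,'' i.e., of Proposition~\ref{Fundamental_Finite_Modules}(3) applied with $M=A$ and $M'=I$, which is precisely what you spell out (including the identification of the canonical topology on $A$ with its Banach algebra topology via uniqueness).
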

\section{Differential preliminaries}\label{Differential preliminaries}

In this section we recall the definition and basic properties of rings and modules of differential operators (see \cite[\href{https://stacks.math.columbia.edu/tag/09CH}{Tag 09CH}]{stacks-project} for the further discussion).

\medskip

Let $K$ be a field and let $A$ be a commutative $K$-algebra. The \textit{ring of Grothendieck differential operators} on $A$, which we denote $\cD_{A/K}$, is defined as follows. We set $\cD^{\leq -1}_{A/K}=\{0\}$, and for any $n\geq0$ we define inductively
\[
\cD_{A/K}^{\leq n}=\left\{P\in\textnormal{Hom}_{K}(A,A) :[P,a]\in\cD_{A/K}^{\leq n-1}\;\textnormal{for all }a\in A\right\}.
\]
Then by the definition
\begin{equation}\label{DiffRing}
\cD_{A/K}=\bigcup_{n\geq 0}\cD_{A/K}^{\leq n}.
\end{equation}
It is standard that $\cD_{A/K}$ is a filtered ring and that we have
\[
\cD_{A/K}^{\leq 0}=A,\qquad \cD_{A/K}^{\leq 1}=A\oplus\textnormal{Der}_{K}(A).
\]
More generally, when $M,N$ are $A$-modules we can consider the $A$-module $\cD_{A/K}(M,N)$ of differential operators from $M$ to $N$, which is defined in way similar to (\ref{DiffRing}). We put $\cD_{A/K}^{\leq -1}(M,N)=\{0\}$, we define
\[
\cD_{A/K}^{\leq n}(M,N)=\left\{P\in\textnormal{Hom}_{K}(M,N) :[P,a]\in\cD_{A/K}^{\leq n-1}\;\textnormal{for all }a\in A\right\},
\]
and we set
\[
\cD_{A/K}(M,N)=\bigcup_{n\geq 0}\cD_{A/K}^{\leq n}(M,N).
\]
In the above, we identify an element $a\in A$ with an $A$-linear endomorphism of $M$ (resp. $N$) given by $m\mapsto am$ and abusing the notation slightly we define $[P,a]$ to be the $K$-linear map
\[
[P,a]:m\mapsto P(am)-aP(m).
\]
The following property of differential operators is a straightforward calculation.

\begin{lem}[{\cite[\href{https://stacks.math.columbia.edu/tag/09CJ}{Tag 09CJ}]{stacks-project}}]
Let $M_{1},M_{2},M_{3}$ be $A$-modules. If $P_{1}\in\cD_{A/K}^{\leq n_{1}}(M_{1},M_{2})$ and $P_{2}\in\cD_{A/K}^{\leq n_{2}}(M_{2},M_{3})$, then the composition $P_{2}P_{1}\in\cD_{A/K}^{\leq n_{1}+n_{2}}(M_{1},M_{3})$.
\end{lem}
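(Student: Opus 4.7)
The plan is a straightforward induction on the total order $n_{1}+n_{2}$, using the commutator version of the Leibniz rule.

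For the base case $n_{1}=n_{2}=0$, I would unwind the definitions: $P\in\cD_{A/K}^{\leq 0}(M,N)$ means $[P,a]=0$ for all $a\in A$, which is equivalent to saying $P$ is $A$-linear. Since a composition of $A$-linear maps is $A$-linear, $P_{2}P_{1}\in\cD_{A/K}^{\leq 0}(M_{1},M_{3})$.

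For the inductive step, assume the claim for all pairs $(m_{1},m_{2})$ with $m_{1}+m_{2}<n_{1}+n_{2}$. The key computation is the identity
\[
[P_{2}P_{1},a]=P_{2}[P_{1},a]+[P_{2},a]P_{1}
\]
for any $a\in A$, which is a one-line check directly from the definition $[Q,a](m)=Q(am)-aQ(m)$. Now $[P_{1},a]\in\cD_{A/K}^{\leq n_{1}-1}(M_{1},M_{2})$ and $P_{2}\in\cD_{A/K}^{\leq n_{2}}(M_{2},M_{3})$ have total order $n_{1}+n_{2}-1$, and symmetrically for the other summand, so the inductive hypothesis places each term in $\cD_{A/K}^{\leq n_{1}+n_{2}-1}(M_{1},M_{3})$. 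Hence the sum lies there, and $[P_{2}P_{1},a]\in\cD_{A/K}^{\leq n_{1}+n_{2}-1}(M_{1},M_{3})$ for every $a$, which is exactly the defining condition for $P_{2}P_{1}\in\cD_{A/K}^{\leq n_{1}+n_{2}}(M_{1},M_{3})$.

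There is no genuine obstacle here; the only thing that requires any care is verifying that $\cD_{A/K}^{\leq n}(M_{1},M_{3})$ is closed under the $A$-module operations appearing on the right-hand side (sums and precompositions/postcompositions by $A$-linear maps stay within the same order), but this is immediate from the definition. The lemma is essentially a formal property of commutators of $K$-linear endomorphisms.
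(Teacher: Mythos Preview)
Your argument is correct and is exactly the standard induction via the commutator Leibniz identity $[P_{2}P_{1},a]=P_{2}[P_{1},a]+[P_{2},a]P_{1}$. The paper does not supply its own proof but simply cites \cite[\href{https://stacks.math.columbia.edu/tag/09CJ}{Tag 09CJ}]{stacks-project}, where precisely this computation is carried out, so you have reproduced the intended proof.
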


As a consequence of the above, we obtain the following easy lemma which turns out to be useful for constructing discontinuous differential operators between Banach modules.

\begin{lem}\label{CompositionLemma}
Let $M,N$ be two $A$-modules and let $K\subset L$ be a field extension. Assume that we are given
\begin{enumerate}
    \item An $A$-module surjection $\pi:M\to L$,
    \item an $A$-module injection $\epsilon:L\to N$,
    \item a $K$-linear derivation $\delta:L\to L$.
\end{enumerate}
Then the composition
\begin{equation*}
    \begin{tikzcd}
        M\arrow{r}{\pi}&L\arrow{r}{\delta}&L\arrow{r}{\epsilon}&N
    \end{tikzcd}
\end{equation*}
is an element of $\cD_{A/K}^{\leq 1}(M,N)$.
\end{lem}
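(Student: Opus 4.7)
The plan is to exhibit $P=\epsilon\circ\delta\circ\pi$ as a composition of three differential operators and then invoke the composition lemma stated just above. First, since $\pi$ and $\epsilon$ are $A$-linear, they automatically lie in $\cD_{A/K}^{\leq 0}(M,L)$ and $\cD_{A/K}^{\leq 0}(L,N)$, respectively. The main step is then to verify that $\delta$, which is given only as a $K$-linear derivation of the field $L$, in fact belongs to $\cD_{A/K}^{\leq 1}(L,L)$, i.e., that the commutator $[\delta,a]$ is $A$-linear for every $a\in A$.

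For this step I would use that the $A$-module structure on $L$ is compatible with its ring structure, meaning that it factors through a ring homomorphism $\varphi\colon A\to L$ (this is the natural setup, and it is satisfied in the intended applications in which $L$ is a residue field of $A$). Under this assumption $a\cdot y=\varphi(a)y$ inside $L$, and the Leibniz rule for $\delta$ gives
\[
[\delta,a](y)=\delta(\varphi(a)y)-\varphi(a)\delta(y)=\delta(\varphi(a))\,y,
\]
which is multiplication by the fixed element $\delta(\varphi(a))\in L$, hence $L$-linear and a fortiori $A$-linear. Thus $[\delta,a]\in\cD_{A/K}^{\leq 0}(L,L)$, which by definition means $\delta\in\cD_{A/K}^{\leq 1}(L,L)$. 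Applying the composition lemma to the triple $\pi\in\cD_{A/K}^{\leq 0}$, $\delta\in\cD_{A/K}^{\leq 1}$, $\epsilon\in\cD_{A/K}^{\leq 0}$, we conclude $\epsilon\circ\delta\circ\pi\in\cD_{A/K}^{\leq 0+1+0}(M,N)=\cD_{A/K}^{\leq 1}(M,N)$, as claimed.

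The only subtle point is the implicit compatibility of the $A$-action on $L$ with its field structure. If one prefers not to assume it tacitly, the same computation can be done directly on $M$: one expands $[P,a](m)=\epsilon\bigl(\delta(a\cdot\pi(m))-a\cdot\delta(\pi(m))\bigr)$ and checks $A$-linearity of $m\mapsto\delta(a\cdot\pi(m))-a\cdot\delta(\pi(m))$ using the Leibniz rule for $\delta$ together with $A$-linearity of $\pi$. Either way the algebraic heart of the argument is the Leibniz identity $\delta(\varphi(a)y)-\varphi(a)\delta(y)=\delta(\varphi(a))y$, and I do not expect any real obstacle beyond keeping track of this compatibility.
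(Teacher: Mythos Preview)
Your approach is correct and is precisely what the paper does: it states the lemma as an immediate consequence of the composition lemma for differential operators (Stacks \href{https://stacks.math.columbia.edu/tag/09CJ}{09CJ}), with no further argument given. Your observation about the tacit compatibility of the $A$-action on $L$ with its ring structure is a fair point of hygiene; the paper leaves this implicit, and in the only application (Proposition~\ref{DDD=0}, where $L=A/\mm$) it is automatically satisfied.
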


\section{constructions from classical functional analysis}
In this section we present classical results from the automatic continuity theory in the nonarchimedean setting and we give some refinements building on the noetherianity assumption. At the end of the section we include a short historical remark where we try to trace original authors of the presented results.
\medskip

Let $K$ be a nonarchimedean field, and let $M, N$ be two $K$-Banach spaces. Given a $K$-linear map $\varphi:M\to N$ we want to decide whether or not it is continuous. The first observation is that this property may be checked on closed subspaces of finite codimension in $M$. This result is \textit{folklore} but we provide a short proof for completeness.
\begin{lem}\label{FiniteCodimensionContinuity}
Assume that there exists a closed subspace $M'\subset M$ such that
\begin{enumerate}
\item $\dim_{K}M/M'<\infty$
\item The restriction $\varphi:M'\to N$ is continuous.
\end{enumerate}
Then $\varphi$ is continuous.
\end{lem}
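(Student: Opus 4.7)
The plan is to reduce the problem to showing that $M$ splits topologically as a direct sum $M = M' \oplus F$ for some finite-dimensional complement $F$, and then observe that any $K$-linear map out of a finite-dimensional Banach space is automatically continuous.

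First I would pick a basis $\bar{e}_{1},\dots,\bar{e}_{n}$ of the finite-dimensional quotient $M/M'$ and lift it to elements $e_{1},\dots,e_{n}\in M$. Set $F=\sum_{j=1}^{n}K\cdot e_{j}$, so that algebraically $M=M'\oplus F$. By Lemma \ref{ClosedFiniteDimension}, $F$ is closed in $M$, and since $M'$ is closed by assumption, the quotient map $q:M\to M/M'$ is continuous. The restriction $q|_{F}:F\to M/M'$ is a $K$-linear bijection between finite-dimensional Banach spaces, so by Proposition \ref{FiniteNorms} it is a topological isomorphism. In particular its inverse $s:M/M'\to F$ is continuous, and hence the projection $p_{F}:=s\circ q:M\to F$ is continuous. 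Consequently $p_{M'}:=\mathrm{id}_{M}-p_{F}:M\to M'$ is continuous as well.

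Now I would decompose
\[
\varphi=(\varphi|_{M'})\circ p_{M'}+(\varphi|_{F})\circ p_{F}.
\]
The first summand is continuous because $\varphi|_{M'}$ is continuous by hypothesis (2) and $p_{M'}$ is continuous by the step above. For the second summand, $\varphi|_{F}:F\to N$ is a $K$-linear map from a finite-dimensional Banach $K$-vector space, hence continuous by Proposition \ref{FiniteNorms} (every norm on $F$ is equivalent to the one pulled back from $N$), and $p_{F}$ is continuous. Therefore $\varphi$ is continuous, as desired.

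The only nontrivial point is producing the continuous linear splitting of $q$, and this is immediate from Proposition \ref{FiniteNorms} together with Lemma \ref{ClosedFiniteDimension}; so I do not expect any serious obstacle here. The argument is essentially the nonarchimedean analogue of the standard fact that closed subspaces of finite codimension are topologically complemented.
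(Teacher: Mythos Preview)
Your proof is correct and follows essentially the same approach as the paper's: both produce a finite-dimensional (hence closed, by Lemma~\ref{ClosedFiniteDimension}) complement $F$ of $M'$, argue that the resulting algebraic splitting $M=M'\oplus F$ is topological, and then split $\varphi$ into a piece continuous by hypothesis and a piece factoring through a finite-dimensional space. One small wording issue: your parenthetical justification ``every norm on $F$ is equivalent to the one pulled back from $N$'' only makes sense when $\varphi|_{F}$ is injective; the correct (and simpler) reason is that any $K$-linear map from a finite-dimensional Banach $K$-space is continuous, which follows directly from Proposition~\ref{FiniteNorms}.
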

\begin{proof}
First of all, every closed subspace $M'\subset M$ of finite codimension is \textit{complemented}, i.e., there exists a closed subspace $M''\subset M$ such that $M=M'\oplus M''$ as topological vector spaces. Indeed, since $M'$ has finite codimension there exists a finitely dimensional $M''$ such that $M=M'\oplus M''$ as vector spaces. To show that the topological structure on both sides is the same it suffices to show that $M''$ is a closed subspace. However, every finitely dimensional subspace is closed by Lemma \ref{ClosedFiniteDimension}.
\medskip
Now, consider the $K$-linear map
\[
\varphi':M=M'\oplus M''\to M;\qquad \varphi'(m_{1},m_{2})=(\varphi(m_{1}),0)
\]
By the initial assumption on $\varphi$ and by the above considerations $\varphi'$ is continuous. Therefore to show that $\varphi$ is continuous it suffices to show that $\varphi''=\varphi-\varphi'$ is continuous. However, since $M'$ is closed we see that the projection $\pi:M\to M/M'$ is continuous. Since $\varphi''(M')=0$ we have a commutative diagram
\[
\begin{tikzcd}
M\arrow{rd}{\varphi''}\arrow{d}{\pi}\\
M/M'\arrow{r}&M
\end{tikzcd}
\]
where the horizontal arrow is necessarily continuous as its source has finite dimension. Therefore $\varphi''$ is continuous as a composition of continuous maps and we are done.
\end{proof}
A more sophisticated tool for studying continuity is the \textit{separating space} of $\varphi$, i.e., the subspace of $N$ defined as
\begin{equation}\label{SeparatinSpace}
\mathfrak{S}(\varphi)=\left\{ y\in N :\exists \{x_{n}\}\subset M\textnormal{ with}\lim_{n\to\infty} x_{n}=0\textnormal{ and} \lim_{n\to \infty}\varphi(x_{n})=y \right\}.
\end{equation}
As explained below, the separating space can be seen as a quantitative measure of discontinuity of $\varphi$.
\begin{lem}\label{SeparatingProperties}
With the notation above the following hold:
\begin{enumerate}
    \item The morphism $\varphi$ is continuous if and only if $\mathfrak{S}(\varphi)=0$.

    \item More generally, if we consider the diagram
    \[
    \begin{tikzcd}
M\arrow{r}{\varphi}&N\arrow{r}{a}&P,
    \end{tikzcd}
    \]
    where $a:N\to P$ is a continuous $K$-linear map of Banach $K$-vector spaces then the composition $a\varphi$ is continuous if and only if $a\left(\mathfrak{S}(\varphi)\right)=0$.
\end{enumerate}
\end{lem}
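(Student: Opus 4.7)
The plan is to handle (1) by the nonarchimedean closed graph theorem, and to deduce (2) from (1) by factoring through the quotient $N/\mathfrak{S}(\varphi)$.

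For (1), the forward implication is immediate: if $\varphi$ is continuous then $x_{n}\to 0$ forces $\varphi(x_{n})\to 0$, so no nonzero $y$ can belong to $\mathfrak{S}(\varphi)$. Conversely, I would verify that the graph of $\varphi$ is closed: if $x_{n}\to x$ and $\varphi(x_{n})\to y$, then $x_{n}-x\to 0$ while $\varphi(x_{n}-x)=\varphi(x_{n})-\varphi(x)\to y-\varphi(x)$, so $y-\varphi(x)\in\mathfrak{S}(\varphi)=0$. The closed graph theorem for nonarchimedean Banach spaces (a consequence of the open mapping theorem recorded in \cite{BGR}) then yields continuity.

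For (2), the forward direction is the same trick: given $y\in\mathfrak{S}(\varphi)$ witnessed by $x_{n}\to 0$ with $\varphi(x_{n})\to y$, continuity of $a$ gives $a\varphi(x_{n})\to a(y)$, while continuity of $a\varphi$ gives $a\varphi(x_{n})\to 0$, so $a(y)=0$. For the converse, assuming $a\mathfrak{S}(\varphi)=0$, I would first show that $\mathfrak{S}(\varphi)$ is a closed $K$-linear subspace of $N$ by a diagonal argument: if $y_{n}\in\mathfrak{S}(\varphi)$ with $y_{n}\to y$, and each $y_{n}$ is witnessed by some sequence $x_{n}^{(k)}\to 0$ with $\varphi(x_{n}^{(k)})\to y_{n}$, then suitably chosen indices $k_{n}$ produce $x_{n}^{(k_{n})}\to 0$ with $\varphi(x_{n}^{(k_{n})})\to y$. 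Hence $N/\mathfrak{S}(\varphi)$ is a Banach space, and the continuous map $a$ factors as $a=\bar{a}\circ\pi$ with $\bar{a}$ continuous, so it suffices to prove that $\pi\varphi:M\to N/\mathfrak{S}(\varphi)$ is continuous, which by (1) reduces to $\mathfrak{S}(\pi\varphi)=0$.

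The main obstacle is this last equality. Given $\bar{y}\in\mathfrak{S}(\pi\varphi)$ realized by $x_{n}\to 0$ with $\pi\varphi(x_{n})\to\bar{y}$, I would lift $\bar{y}$ to $y\in N$ and use the definition of the quotient norm to pick $s_{n}\in\mathfrak{S}(\varphi)$ with $\varphi(x_{n})-y-s_{n}\to 0$. A further diagonal extraction inside each $\mathfrak{S}(\varphi)$-defining sequence for $s_{n}$ produces $w_{n}\in M$ with $|w_{n}|<1/n$ and $|\varphi(w_{n})-s_{n}|<1/n$. The telescoping identity
\[
\varphi(x_{n}-w_{n})=\bigl(\varphi(x_{n})-y-s_{n}\bigr)-\bigl(\varphi(w_{n})-s_{n}\bigr)+y
\]
then shows $\varphi(x_{n}-w_{n})\to y$ while $x_{n}-w_{n}\to 0$, exhibiting $y\in\mathfrak{S}(\varphi)$ and hence $\bar{y}=\pi(y)=0$. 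Applying (1) to $\pi\varphi$ and composing with the continuous $\bar{a}$ completes the proof.
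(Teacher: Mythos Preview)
Your argument is correct and follows the same underlying strategy as the paper. The paper does not spell out a proof at all: it simply observes that the classical proof in Sinclair's monograph \cite[Lemmas 1.2 and 1.3]{SinclairBook} rests only on the closed graph theorem, which is available in the nonarchimedean setting, and declares that everything carries over verbatim. What you have written is precisely the standard argument behind those references, including the closedness of $\mathfrak{S}(\varphi)$ and the diagonal extraction showing $\mathfrak{S}(\pi\varphi)=0$; indeed your last computation is essentially the inclusion $\mathfrak{S}(a\varphi)\subset\overline{a\,\mathfrak{S}(\varphi)}$ that the paper alludes to in the remark following the lemma.
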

\begin{proof}
The lemma is very well known in the classical setting (see for example \cite[Lemma 1.2, Lemma 1.3, Chapter 1]{SinclairBook}). The proof is based on the closed graph theorem (which is valid also in the nonarchimedean setting, see \cite[Proposition 8.5]{Schneider_book}) and therefore everything carries over \textit{mutatis mutandis} to our situation.
\end{proof}
\begin{rmk}
Even more generally, one can show that $\mathfrak{S}(a\varphi)$ is the topological closure of $a\mathfrak{S}(\varphi)$, but we will not use this fact.
\end{rmk}
The above criterion for continuity is especially useful when combined with the so-called \textit{stabilization property} of Jewell--Sinclair. We write $\textnormal{cl}(V)$ for the topological closure of a subset $V$ of some topological space.
\begin{lem}[N.\ P.\ Jewell, A.\ M.\ Sinclair]\label{Jewell-Sinclair}
Let $K$ be a nonarchimedean field and let $\varphi:M\to N$ be a $K$-linear morphism between two $K$-Banach spaces. Assume that there exist continuous $K$-linear endomorphisms $\{R_{n}\}$ and $\{L_{n}\}$ of $M$ and $N$ respectively, such that
\begin{equation}\label{SeparatingAssumption}
\varphi R_{n}-L_{n}\varphi:M\to N
\end{equation}
are continuous for all $n$. Then there exists a positive integer $k$ such that for all $n\geq k$ the equality
\begin{equation}\label{SeparatingEquality}
\textnormal{cl}\left(L_{1}\dots L_{n}\mathfrak{S}(\varphi)\right)=\textnormal{cl}\left(L_{1}\dots L_{k}\mathfrak{S}(\varphi)\right)
\end{equation}
holds.
\end{lem}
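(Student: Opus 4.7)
The plan is to adapt the classical Jewell--Sinclair gliding-hump argument to the nonarchimedean setting; as it happens, the ultrametric inequality simplifies the convergence aspects of the classical proof.

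The first step is to verify that the chain $V_n := \textnormal{cl}(L_1 \cdots L_n \mathfrak{S}(\varphi))$ is decreasing in $n$. It suffices to show that each $L_m$ maps $\mathfrak{S}(\varphi)$ into itself: given $y \in \mathfrak{S}(\varphi)$ with witnesses $x_k \to 0$ and $\varphi(x_k) \to y$, continuity of $R_m$ gives $R_m x_k \to 0$, and the hypothesis that $L_m \varphi - \varphi R_m$ is continuous then yields
\[
L_m y \;=\; \lim_k L_m \varphi(x_k) \;=\; \lim_k \varphi(R_m x_k) \;\in\; \mathfrak{S}(\varphi).
\]
A routine induction on $n$ using the identity $L_i \varphi = \varphi R_i + (L_i \varphi - \varphi R_i)$ gives the crucial formula
\[
L_1 \cdots L_n \varphi \;=\; \varphi R_1 \cdots R_n + E_n,
\]
where $E_n \colon M \to N$ is continuous, being a sum of compositions of continuous operators with the continuous corrections $L_i \varphi - \varphi R_i$.

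Suppose for contradiction that the chain never stabilizes. Passing to a subsequence, I may assume every inclusion $V_{n+1} \subsetneq V_n$ is strict. For each $n$, the continuous projection $\pi_n \colon N \to N/V_{n+1}$ composed with $L_1 \cdots L_n$ gives a continuous operator $T_n \colon N \to N/V_{n+1}$; strict descent of the chain forces $T_n(\mathfrak{S}(\varphi)) \neq 0$, so by Lemma \ref{SeparatingProperties}(2) the composite $T_n \circ \varphi$ is discontinuous. Consequently I can pick vectors $v^{(n)} \in M$ with $\|v^{(n)}\|$ arbitrarily small and $\|T_n \varphi(v^{(n)})\|$ bounded below. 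The gliding hump then inductively combines these data with scalars $c_n \in K$ satisfying $|c_n| \to 0$ into a single series $w := \sum_n c_n\, R_1 \cdots R_n v^{(n)}$, which converges in $M$ automatically by ultrametric completeness. Applying $\varphi$ to the partial sums and using $L_1 \cdots L_n \varphi = \varphi R_1 \cdots R_n + E_n$ together with continuity of $E_n$, the choices can be arranged so that the resulting element of $N$ projects nontrivially into $N/V_{n+1}$ for infinitely many $n$, contradicting the fact that a single vector of $N$ can lie in only finitely many of the mutually strictly nested $V_n$'s once its projection to some fixed $N/V_{n_0}$ is fixed.

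The hard part will be the combinatorial bookkeeping in the gliding-hump step: choosing the indices, vectors $v^{(n)}$, and scalars $c_n$ so that the continuous errors $E_n$ and the tails of the series do not destroy the separation provided by the maps $T_n$. The nonarchimedean setting is actually friendly here, since summability is automatic as soon as the summands tend to zero, and a nonarchimedean Riesz-type lemma is available in each quotient $V_n / V_{n+1}$. Otherwise the argument parallels Sinclair's treatment in \cite{SinclairBook} mutatis mutandis, and the required functional-analytic tools (the closed graph and open mapping theorems for nonarchimedean Banach spaces) have already been cited in the paper.
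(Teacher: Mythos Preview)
Your outline follows exactly the route the paper points to: the paper's own ``proof'' is nothing more than a reference to \cite[Lemma~2.1]{Sinclair} and \cite[Lemma~1]{Jewell-Sinclair}, together with the single remark that the non-trivial valuation on $K$ is what allows one to rescale an operator $T_n$ by a scalar so that its norm is at most~$1$. So your approach and the paper's are the same.

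Two comments are still worth making. First, your description of the final contradiction is garbled: a vector projecting nontrivially to $N/V_{n+1}$ for infinitely many $n$ is no contradiction whatsoever, and the clause about a vector ``lying in only finitely many of the mutually strictly nested $V_n$'s'' is simply false (anything in $\bigcap_n V_n$ lies in all of them). The genuine contradiction in the gliding hump is metric: after rescaling so that $\|L_1\cdots L_n\|\leq 1$ for all $n$, one has the uniform bound $\|\pi_n L_1\cdots L_n\varphi(w)\|\leq\|\varphi(w)\|$, while the inductive choices of $v^{(n)}$ and $c_n$ force this same quantity to exceed $n$. Since you ultimately defer to Sinclair's treatment for the bookkeeping this is not fatal, but the heuristic you wrote should be replaced. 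Second, that rescaling step is precisely where Assumption~\ref{Nontrivial_Assumption} enters, and the paper singles this out as the one place requiring care in the transfer from $\CC$; in your sketch the hypothesis is used only implicitly (through the existence of scalars $c_n\to 0$), and it would be better to make its role explicit.
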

\begin{proof}[About the proof]
Over $\CC$ the proof follows from \cite[Lemma 2.1]{Sinclair} and \cite[Lemma 1]{Jewell-Sinclair}. The same proof is valid over any nonarchimedean field $K$ provided it is non-trivially valued (which we always assume). This assumption is used to rescale some operator $T_{n}$ by a nonzero constant so that its operator norm is $\leq 1$ (cf. the proof of \cite[Lemma 2.1]{Sinclair}).  
\end{proof}
Let us now specialize to the case when $\varphi:M\to N$ is a $K$-linear map between two finitely generated $A$-modules and $A$ is a commutative noetherian $K$-algebra over a nonarchimedean field $K$. We fix this setting and the notation for the rest of this section.
\begin{lem}\label{Jewell-Sinclair-Nakayama}
Let $a\in A$ and assume that $[\varphi,a]$ is continuous. Then there exists $b\in A$ and an integer $k\geq0$ such that
\[
(1-ab)a^{k}\mathfrak{S}(\varphi)=0.
\]
\end{lem}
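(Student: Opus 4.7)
The plan is to apply the Jewell--Sinclair stabilization (Lemma \ref{Jewell-Sinclair}) with both $R_n$ and $L_n$ taken to be multiplication by $a$, acting on $M$ and $N$ respectively, for every $n$. These are continuous endomorphisms because $M$ and $N$ are Banach $A$-modules, and with this choice $\varphi R_n - L_n \varphi = [\varphi, a]$, which is continuous by hypothesis. Lemma \ref{Jewell-Sinclair} then produces an integer $k \geq 0$ such that
\[
\textnormal{cl}\!\left(a^{k+1}\mathfrak{S}(\varphi)\right) = \textnormal{cl}\!\left(a^{k}\mathfrak{S}(\varphi)\right).
\]

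Next I would convert this topological stabilization into a strictly algebraic one, using noetherianity of $A$. Let $S \subset N$ be the $A$-submodule generated by $\mathfrak{S}(\varphi)$. Since $N$ is finitely generated over the noetherian ring $A$, the submodule $S$ is itself finitely generated, and by Proposition \ref{Fundamental_Finite_Modules}(3) every $A$-submodule of $N$ — in particular $a^{k+1}S$ — is closed in the canonical topology. Since $a^{k+1}\mathfrak{S}(\varphi) \subset a^{k+1}S$, taking closures yields
\[
a^{k}\mathfrak{S}(\varphi) \subset \textnormal{cl}\!\left(a^{k}\mathfrak{S}(\varphi)\right) = \textnormal{cl}\!\left(a^{k+1}\mathfrak{S}(\varphi)\right) \subset a^{k+1}S.
\]
Passing to the $A$-submodules generated by both sides gives $a^{k}S \subset a^{k+1}S$, and the reverse inclusion is automatic, so $a\cdot (a^{k}S) = a^{k}S$.

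To conclude I would invoke the determinantal (Nakayama) trick on the finitely generated $A$-module $T := a^{k}S$: since $aT = T$, there exists $b \in A$ with $(1 - ab)T = 0$. In particular $(1 - ab)\,a^{k}\mathfrak{S}(\varphi) \subset (1-ab)T = 0$, which is exactly the desired conclusion.

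The conceptual heart — and the only step where noetherianity of $A$ really does work — is the passage from the topological equality produced by Jewell--Sinclair to the genuinely algebraic equality $a^{k}S = a^{k+1}S$. It succeeds precisely because Proposition \ref{Fundamental_Finite_Modules} forces the finitely generated submodule $a^{k+1}S$ to be closed, so the closure $\textnormal{cl}(a^{k+1}\mathfrak{S}(\varphi))$ is trapped inside it. Setting up the Jewell--Sinclair hypothesis and applying Nakayama at the end are routine, so I would not expect further obstacles.
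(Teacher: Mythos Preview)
Your proof is correct and follows essentially the same route as the paper: apply Lemma~\ref{Jewell-Sinclair} with all $R_n$, $L_n$ equal to multiplication by $a$, pass from the topological stabilization to the algebraic equality $a^{k}S=a^{k+1}S$ using that finitely generated submodules of $N$ are closed (Proposition~\ref{Fundamental_Finite_Modules}(3)), and finish with Nakayama. Your justification of the passage $\textnormal{cl}(a^{k+1}\mathfrak{S}(\varphi))\subset a^{k+1}S$ is in fact slightly more explicit than the paper's, which packages the same idea into the identity $a^{t}N'=\langle\textnormal{cl}(a^{t}\mathfrak{S}(\varphi))\rangle$.
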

\begin{proof}
With the notation of Lemma \ref{Jewell-Sinclair} let us set $L_{n}$ (resp. $R_{n}$) to be the multiplication by $a$ on $M$ (resp. $N$) for all $n\geq0$. These maps are continuous and, by assumption, so is $L_{n}\varphi-\varphi R_{n}=-[\varphi,a]$. From Lemma \ref{Jewell-Sinclair} we have for some $k\geq0$ the equality
\begin{equation}\label{k=k+1}
\textnormal{cl}(a^{k+1}\mathfrak{S}(\varphi))=\textnormal{cl}(a^{k}\mathfrak{S}(\varphi)).
\end{equation}
Let $N'\subset N$ be the $A$-submodule generated by $\mathfrak{S}(\varphi)$. Then $a^{t}N'$ is closed in $N$ for all $t\geq0$ by Proposition \ref{Fundamental_Finite_Modules} (3) and therefore
\begin{equation}\label{separatingmodule}
a^{t}N'=\left\langle\textnormal{cl}(a^{t}\mathfrak{S}(\varphi))\right\rangle
\end{equation}
(here, we write $\langle S\rangle$ for the submodule of $N$ generated by a subset $S\subset N$). Combining equations (\ref{k=k+1}) and (\ref{separatingmodule}) we obtain that
\[
a^{k+1}N'=a^{k}N'.
\]
The lemma follows immediately from Nakayama's lemma applied to the ideal $I=(a)$ and the finitely generated (as $A$ is noetherian) $A$-module $a^{k}N'$.
\end{proof}
We exploit further the module structure on $M$ and $N$. The \textit{continuity ideal} of $\varphi$ is defined as
\begin{equation}\label{ContinuityIdeal}
\mathfrak{C}(\varphi)=\left\{a\in A:\varphi a\textnormal{ is continuous}\right\}.
\end{equation}
Here $\varphi a$ is the $K$-linear map $m\mapsto \varphi(am)$. It is straightforward that $\mathfrak{C}(\varphi)$ is an ideal in $A$.
\begin{lem}\label{ContinuityIdealLemma}
The restriction of $\varphi$ to the submodule $\mathfrak{C}(\varphi)M$ is continuous.
\end{lem}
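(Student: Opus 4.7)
The plan is to exploit the noetherianity of $A$ together with the Banach open mapping theorem (already packaged as Proposition \ref{Fundamental_Finite_Modules}(4)) to reduce continuity of $\varphi$ on $\mathfrak{C}(\varphi)M$ to continuity of the individual maps $\varphi\circ a_i$ for a finite set of generators $a_1,\dots,a_s$ of the ideal $\mathfrak{C}(\varphi)$.

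First, I would observe that since $A$ is noetherian, the ideal $\mathfrak{C}(\varphi)\subset A$ is finitely generated; pick generators $a_1,\dots,a_s$. The submodule $\mathfrak{C}(\varphi)M\subset M$ is then a finitely generated $A$-module, and by Proposition \ref{Fundamental_Finite_Modules}(3) its canonical topology coincides with the subspace topology inherited from $M$. So it is enough to check that $\varphi$ restricted to $\mathfrak{C}(\varphi)M$ is continuous with respect to its canonical topology.

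Next, I would consider the $A$-linear map
\[
\Phi:M^{\oplus s}\longrightarrow \mathfrak{C}(\varphi)M,\qquad (m_1,\dots,m_s)\longmapsto \sum_{i=1}^{s}a_i m_i,
\]
which is surjective by the choice of generators. Both $M^{\oplus s}$ and $\mathfrak{C}(\varphi)M$ are finitely generated $A$-modules, so by Proposition \ref{Fundamental_Finite_Modules}(4) the map $\Phi$ is a topological quotient map. Consequently, continuity of $\varphi|_{\mathfrak{C}(\varphi)M}$ is equivalent to continuity of the composition $\varphi\circ\Phi:M^{\oplus s}\to N$.

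Finally, I would just compute:
\[
(\varphi\circ\Phi)(m_1,\dots,m_s)=\sum_{i=1}^{s}\varphi(a_im_i)=\sum_{i=1}^{s}(\varphi a_i)(m_i).
\]
Each map $\varphi a_i$ is continuous by the very definition of $\mathfrak{C}(\varphi)$ in \eqref{ContinuityIdeal}, and a finite sum of continuous $K$-linear maps is continuous, so $\varphi\circ\Phi$ is continuous, which finishes the proof. There is no real obstacle here; the only subtle point is that one must invoke noetherianity twice (to know $\mathfrak{C}(\varphi)$ is finitely generated and that $\mathfrak{C}(\varphi)M$ carries the expected canonical topology), and one must invoke the open mapping theorem through Proposition \ref{Fundamental_Finite_Modules}(4) to pass from $\Phi$ being a continuous surjection to $\Phi$ being a quotient map.
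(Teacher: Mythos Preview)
Your proof is correct and follows essentially the same strategy as the paper: use noetherianity to get finitely many generators of $\mathfrak{C}(\varphi)$, surject a ``nice'' module onto $\mathfrak{C}(\varphi)M$, invoke Proposition~\ref{Fundamental_Finite_Modules}(4) to know this surjection is a quotient map, and check continuity upstairs using that each $\varphi a_i$ is continuous.

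The execution differs in two minor but pleasant ways. The paper surjects a free module $F=\bigoplus_{i,j}Ae_{ij}$ (indexed by generators of both $\mathfrak{C}(\varphi)$ and $M$) onto $\mathfrak{C}(\varphi)M$, whereas you surject $M^{\oplus s}$ directly; your choice avoids the second set of generators and makes the map visibly built from the $\varphi a_i$. Second, the paper verifies continuity of the composite via the closed graph theorem and an explicit sequence computation, while you observe immediately that $\varphi\circ\Phi=\sum_i(\varphi a_i)\circ\mathrm{pr}_i$ is a finite sum of continuous maps. Both routes rest on the same inputs (noetherianity and Proposition~\ref{Fundamental_Finite_Modules}), so neither buys more generality; yours is simply a little shorter.
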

\begin{proof}
The noetherianity assumption ensures that $\mathfrak{C}(\varphi)$ is finitely generated. Let us pick generators $a_{1},\dots, a_{r}$ of $\mathfrak{C}(\varphi)$ and $m_{1},\dots,m_{s}$ of $M$. Consider the free $A$-module $F=\bigoplus_{1\leq i\leq r,1\leq j\leq s}A e_{ij}$ equipped with the supremum norm
\[
|\sum_{i,j}b_{ij}e_{ij}|=\max|b_{ij}|.
\]
By Proposition \ref{Fundamental_Finite_Modules} the topology on $\mathfrak{C}(\varphi)M$ agrees with the quotient topology for the projection $\pi:F\to \mathfrak{C}(\varphi)M$ defined on the basis by $\pi(e_{ij})=a_{i}m_{j}$. Thus to prove the lemma we only need to show that the composition
\begin{equation*}
\begin{tikzcd}
F\arrow{r}{\pi}&\mathfrak{C}(\varphi)M\arrow{r}{\varphi}&N
\end{tikzcd}
\end{equation*}
is continuous. By the closed graph theorem we only need to show that if $x_{n}$ is a sequence of elements of $F$ with $\lim_{n\to\infty}x_{n}=0$ then $\lim_{n\to\infty}\varphi\pi(x_{n})=0$. Let us write
\[
x_{n}=\sum_{i,j} b_{ij}^{n}e_{ij}. 
\]
Note that the norm on $F$ enforces that for every fixed pair $(i,j)$ we have
\begin{equation}\label{limzero}
\lim_{n\to\infty}b^{n}_{ij}=0.
\end{equation}
Therefore
\begin{equation*}
\begin{split}
\lim_{n\to\infty}\varphi\pi(x_{n})=&
\lim_{n\to\infty}\varphi\pi\left(\sum _{i,j}b_{ij}^{n}e_{ij}\right)\\
=&
\lim_{n\to\infty}\varphi\left(\sum_{i,j}b_{ij}^{n}a_{i}m_{j}\right)\\
=&\lim_{n\to\infty}\sum_{i,j}\varphi a_{i}\left( b_{ij}^{n}m_{j}\right)=0.
\end{split}
\end{equation*}
To see that the last equality holds it suffices to notice that $\varphi a_{i}$ are continuous by assumption, $\lim_{n\to\infty} b_{ij}^{n}m_{j}=0$ by equality (\ref{limzero}) and the sum $\sum_{i,j}$ is taken over a finite set.
\end{proof}
\begin{rmk}\label{ContinuityIdealMeasuresDiscontinuity}
Because of Lemma \ref{ContinuityIdealLemma} we may think intuitively of $\mathfrak{C}(\varphi)$ as of yet another measure of discontinuity of $\varphi$. If this ideal is `large' then so is the submodule $\mathfrak{C}(\varphi)M\subset M$  and thus the discontinuity of $\varphi$ is `small' in the sense that $\varphi$ becomes continuous after restricting the source to a `large' submodule. 
\end{rmk}
As a corollary we get
\begin{lem}\label{CofiniteContinuityIdeal}
Assume that $\dim_{K}A/\mathfrak{C}(\varphi)<\infty$. Then $\varphi$ is continuous.
\end{lem}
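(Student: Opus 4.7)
The plan is to deduce the statement directly from Lemma \ref{ContinuityIdealLemma} together with Lemma \ref{FiniteCodimensionContinuity} applied to the submodule $M' := \mathfrak{C}(\varphi)M \subset M$. There are three things to verify: that $M'$ is closed in $M$, that $\dim_K M/M' < \infty$, and that $\varphi|_{M'}$ is continuous.

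First, $M'$ is an $A$-submodule of the finitely generated $A$-module $M$, so by Proposition \ref{Fundamental_Finite_Modules}~(3) it is automatically closed with respect to the canonical topology. Second, the continuity of $\varphi|_{M'}$ is exactly the content of Lemma \ref{ContinuityIdealLemma}. The only genuine point to check is the finite codimension condition.

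For this, note that $M/\mathfrak{C}(\varphi)M \cong M\otimes_A A/\mathfrak{C}(\varphi)$ carries a natural structure of a finitely generated $A/\mathfrak{C}(\varphi)$-module: if $m_1,\dots,m_s$ generate $M$ over $A$, their classes generate $M/\mathfrak{C}(\varphi)M$ over $A/\mathfrak{C}(\varphi)$. Since by hypothesis $\dim_K A/\mathfrak{C}(\varphi)<\infty$, this forces $\dim_K M/\mathfrak{C}(\varphi)M<\infty$ as well.

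With these three ingredients in hand, Lemma \ref{FiniteCodimensionContinuity} immediately yields that $\varphi$ is continuous on all of $M$. I do not anticipate any real obstacle here, as the lemma is essentially just a packaging of Lemma \ref{ContinuityIdealLemma} with the finite-codimension trick; the only subtle point worth isolating is the step passing from finiteness of $A/\mathfrak{C}(\varphi)$ over $K$ to finiteness of $M/\mathfrak{C}(\varphi)M$ over $K$, which is precisely where finite generation of $M$ over $A$ is used.
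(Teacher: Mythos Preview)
Your proof is correct and follows essentially the same route as the paper: set $M'=\mathfrak{C}(\varphi)M$, observe it is a closed submodule of finite $K$-codimension (using finite generation of $M$ over $A$ and $\dim_K A/\mathfrak{C}(\varphi)<\infty$), invoke Lemma~\ref{ContinuityIdealLemma} for continuity on $M'$, and conclude via Lemma~\ref{FiniteCodimensionContinuity}.
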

\begin{proof}
Since $M$ is a finitely generated $A$-module, the $A/\mathfrak{C}(\varphi)$-module $M/\mathfrak{C}(\varphi)M$ is also finitely generated. If $A/\mathfrak{C}(\varphi)$ is a finite-dimensional $K$-vector space then it follows that $\dim_{K}M/\mathfrak{C}(\varphi)M<\infty$, i.e., that $\mathfrak{C}(\varphi)M$ is a $K$-vector subspace of $M$ of finite codimension. Since it is also a submodule of $M$, it is closed by Proposition \ref{Fundamental_Finite_Modules}. Therefore, the claim follows from Lemma \ref{FiniteCodimensionContinuity} because by Lemma \ref{ContinuityIdealLemma} the restriction $\varphi:\mathfrak{C}(\varphi)M\to N$ is continuous.
\end{proof}

The lemma below is obvious, and its proof is omitted. 
\begin{lem}\label{Continuity_Ideal_Contains_Ann(M)}
We have $\textnormal{Ann}_{A}(M)\subset\mathfrak{C}(\varphi)$.
\end{lem}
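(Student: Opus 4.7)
The statement is essentially immediate from the definitions, so my plan is just to unwind them. Take $a \in \textnormal{Ann}_{A}(M)$; by definition of the annihilator we have $am = 0$ for every $m \in M$. Then the map $\varphi a : M \to N$ defined by $m \mapsto \varphi(am)$ is identically zero, because $\varphi(am) = \varphi(0) = 0$ for all $m$. The zero map is trivially continuous, so $a \in \mathfrak{C}(\varphi)$ by the definition (\ref{ContinuityIdeal}).

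There is no real obstacle here; the only thing to be careful about is the direction of multiplication in the definition of $\varphi a$, and I have just used that convention exactly as the paper introduced it right after (\ref{ContinuityIdeal}). So the proof is one line and there is no hidden step to worry about.
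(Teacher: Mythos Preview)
Your proof is correct and matches the paper's treatment: the paper itself declares the lemma ``obvious'' and omits the proof entirely, and the one-line argument you give is exactly the intended unwinding of the definitions.
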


We can also consider the `left version' of $\mathfrak{C}(\varphi)$, which, as we will soon demonstrate, is nothing else than the annihilator of the separating space of $\varphi$. We set
\begin{equation}\label{AnnihilatorSpace}
\mathfrak{A}(\varphi)=\left\{a\in A :a\varphi\textnormal{ is continuous}\right\},
\end{equation}
where $a\varphi$ denotes the $K$-linear map $m\mapsto a\varphi(m)$. This is clearly an ideal in $A$ and it enjoys the following properties.
\begin{lem}\label{A(varphi)}
With the notation above:
\begin{enumerate}

\item $\mathfrak{A}(\varphi)$ equals to the annihilator of the separating space of $\varphi$, i.e.,
\[
\mathfrak{A}(\varphi)=\textnormal{Ann}_{A}(\mathfrak{S}(\varphi))=\left\{a\in A: ay=0\textnormal{ for all }y\in\mathfrak{S}(\varphi)\right\}.
\]
\item If $[\varphi,a]$ is continuous for all $a\in A$ then $\mathfrak{A}(\varphi)=\mathfrak{C}(\varphi)$.
\end{enumerate}
\end{lem}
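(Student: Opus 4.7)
My plan is to prove part (1) as a direct application of Lemma~\ref{SeparatingProperties}(2). The key observation is that for any $a\in A$, multiplication by $a$ defines a continuous $K$-linear endomorphism $\mu_{a}\colon N\to N$, since $N$ is a Banach $A$-module. Applying Lemma~\ref{SeparatingProperties}(2) to the composition
\[
M\xrightarrow{\varphi}N\xrightarrow{\mu_{a}}N,
\]
I obtain that $a\varphi=\mu_{a}\circ\varphi$ is continuous if and only if $\mu_{a}(\mathfrak{S}(\varphi))=a\cdot\mathfrak{S}(\varphi)=0$. Unwinding the definitions of $\mathfrak{A}(\varphi)$ and of $\textnormal{Ann}_{A}(\mathfrak{S}(\varphi))$, this is precisely the asserted equality.

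For part (2), I will use the identity of $K$-linear maps
\[
\varphi a-a\varphi=[\varphi,a]\colon M\to N,
\]
which is simply a restatement of the definition $[\varphi,a](m)=\varphi(am)-a\varphi(m)$ recalled in Section~\ref{Differential preliminaries}. Under the standing hypothesis that $[\varphi,a]$ is continuous for every $a\in A$, the map $\varphi a$ is continuous if and only if $a\varphi$ is, for every $a\in A$. Hence $a\in\mathfrak{C}(\varphi)$ if and only if $a\in\mathfrak{A}(\varphi)$, and the two ideals coincide.

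I do not anticipate any serious obstacle: both parts reduce to careful bookkeeping of definitions combined with tools already established. The only point deserving emphasis is that part (1) genuinely uses that multiplication by $a$ on $N$ is continuous as a $K$-linear map of Banach $K$-vector spaces -- a standard consequence of the Banach $A$-module structure, and exactly what permits Lemma~\ref{SeparatingProperties}(2) to be applied with $a:N\to N$ playing the role of the continuous map denoted there by $a:N\to P$.
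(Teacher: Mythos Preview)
Your proof is correct and follows essentially the same approach as the paper. The only cosmetic difference is in part~(1): the paper verifies the inclusion $\mathfrak{A}(\varphi)\subset\textnormal{Ann}_{A}(\mathfrak{S}(\varphi))$ directly from the definition of the separating space and then invokes Lemma~\ref{SeparatingProperties}(2) for the reverse inclusion, whereas you obtain both inclusions at once from the ``if and only if'' statement of that lemma; part~(2) is identical to the paper's argument.
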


\begin{proof}
(1) Let $y\in \mathfrak{S}(\varphi)$ and let $\{x_{n}\}\subset M$ be a sequence such that $\lim_{n\to\infty}x_{n}$ and $\lim_{n\to\infty}\varphi(x_{n})=y$. If $a\varphi$ is continuous then
\[
0=\lim_{n\to\infty}a\varphi(x_{n})=ay,
\]
i.e., $\mathfrak{A}(\varphi)\subset \textnormal{Ann}_{A}(\mathfrak{S}(\varphi))$. Conversely, assume that $a\mathfrak{S}(\varphi)=0$. Then $a\varphi$ is continuous by Lemma \ref{SeparatingProperties} (2) and therefore $a\in\mathfrak{A}(\varphi)$.
\medskip

(2) We have $\varphi a=[\varphi,a]+a\varphi$ and thus if $[\varphi,a]$ is continuous then $\varphi a$ is continuous if and only if $a\varphi$ is.
\end{proof}

\begin{rmk}
All the constructions introduced in this section are classically studied in the complex functional analysis, in particular, in the automatic continuity theory. The separating space (\ref{SeparatinSpace}) has been introduced by C.\ E.\ Rickard in \cite{Rickart} to study the problem of the uniqueness of the Banach algebra structure on a given Banach algebra. The stabilization property for the separating space (Lemma \ref{Jewell-Sinclair}) appeared first in a more restrictive variant in the work of A.\ M.\ Sinclair \cite{Sinclair} and in the generality stated above in the work of Jewell--Sinclair \cite{Jewell-Sinclair}, where the authors also acknowledge the contribution of M.\ P.\ Thomas. The continuity ideal (\ref{ContinuityIdeal}) and its left analogue (\ref{AnnihilatorSpace}) have been studied by Bade--Curtis, Jr. \cite{Bade-Curtis}, and the authors attribute the introduction of these objects to J.\ R.\ Ringrose \cite{Ringrose}. Finally, we mention that Lemma \ref{ContinuityIdealLemma} may fail without the noetherianity assumption (cf. \cite[Example 1]{Bade-Curtis}). Similarly, Lemma \ref{Jewell-Sinclair-Nakayama} exploits the noetherianity assumption. These two lemmas does not seem to appear in the classical literature.
\end{rmk}

\section{continuity of derivations for field extensions}
Let $K$ be a nonarchimedean field of characteristic zero (which we assume to be non-trivially valued cf. Assumption \ref{Nontrivial_Assumption}) and let $L$ be a Banach $K$-algebra which is a field. In this section we want to investigate under what assumptions on the field extension $K\subset L$ it is true that every $K$-linear derivation of $L$ is continuous. We recall (cf. Remark (\ref{Kedlaya-Remark})) that $L$ does not need to be nonarchimedean, i.e., that we cannot assume that the norm on $L$ is multiplicative. This causes various technical difficulties, but we are able to prove the following proposition.
\begin{prop}\label{DD_p=0}
In the notation and under the assumptions above the following conditions are equivalent:
\begin{enumerate}
    \item Every $K$-linear derivation of $L$ is continuous,

    \item every $K$-linear derivation of $L$ is zero,

    \item the extension $K\subset L$ is algebraic,

    \item the extension $K\subset L$ is finite.
\end{enumerate}
Moreover, if one of the above conditions holds, then (up to equivalence of $K$-algebra norms on $L$) $L$ is a nonarchimedean field.
\end{prop}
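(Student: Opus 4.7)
My plan is to establish the cycle $(4) \Rightarrow (3) \Rightarrow (2) \Rightarrow (1) \Rightarrow (4)$. The implications $(4) \Rightarrow (3)$ and $(2) \Rightarrow (1)$ are trivial, and $(3) \Rightarrow (2)$ is the standard algebraic fact that in characteristic zero every algebraic extension is separable: differentiating the minimal polynomial relation $p(a) = 0$ with any $K$-derivation $\delta$ yields $p'(a)\delta(a) = 0$, and $p'(a) \neq 0$ forces $\delta(a) = 0$. I would split the substantive implication $(1) \Rightarrow (4)$ into the two steps $(1) \Rightarrow (3)$ and $(3) \Rightarrow (4)$. Once $(4)$ holds, the moreover clause follows from Proposition~\ref{FiniteNorms}, since the finite-dimensional $K$-vector space $L$ carries a unique $K$-Banach topology, which must coincide with the one induced by the multiplicative extension $|a| = |N_{L/K}(a)|^{1/[L:K]}$ of $|\cdot|_{K}$.

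The key implication $(3) \Rightarrow (4)$ I would prove by a Baire-category plus cardinality argument. Assume $L/K$ is algebraic and $[L:K] = \infty$, and build a strictly increasing chain $K = E_0 \subsetneq E_1 \subsetneq \cdots$ of finite $K$-subfields inside $L$. The union $F := \bigcup_n E_n$ is a countable $K$-subfield of infinite $K$-dimension, and its closure $\overline{F}$ in $L$ is a closed Banach sub-$K$-algebra. Because the group of units in a Banach algebra is open and inversion is continuous there, $\overline{F}$ is itself a subfield of $L$. Since $\overline{F}$ is contained in $L$ it is algebraic over $K$, hence also over the countable field $F$, and any algebraic extension of a countable field is countable, so $|\overline{F}| \leq \aleph_0$. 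On the other hand $\overline{F}$ is a nonempty complete metric space with no isolated points: for each $a \in \overline{F}$ and $\varepsilon > 0$, Assumption~\ref{Nontrivial_Assumption} provides $c \in K^{\times}$ with $|c| < \varepsilon$, and then $a + c \in \overline{F} \setminus \{a\}$ lies within $\varepsilon$ of $a$. Thus $\overline{F}$ is a nonempty perfect Polish space of cardinality $2^{\aleph_0}$, contradicting $|\overline{F}| \leq \aleph_0$. Hence $[L:K] < \infty$.

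For $(1) \Rightarrow (3)$ I would argue contrapositively. Given a transcendental $t \in L$, enlarge $\{t\}$ to a transcendence basis $T$ of $L/K$ and fix a countable algebraically independent sequence $t_1, t_2, \ldots \in T$. Choose scalars $c_n \in K^{\times}$ with $|c_n t_n|_L \to 0$ (possible by non-triviality of the valuation), and define a $K$-derivation $\delta_0$ on the polynomial ring $K[T]$ by $\delta_0(t_n) = c_n^{-1}$ and $\delta_0(t') = 0$ for $t' \in T \setminus \{t_n\}_{n \geq 1}$. Extending by the Leibniz rule to $K(T)$ and uniquely to the separable algebraic extension $K(T) \subset L$ yields a $K$-derivation $\delta$ of $L$ for which the sequence $x_n := c_n t_n$ converges to zero, while $\delta(x_n) = 1$ for every $n$, witnessing the desired discontinuity.

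The main obstacle is ensuring that the transcendence basis $T$ contains countably many elements: a priori $L$ might have finite positive transcendence degree over $K$, in which case the above construction has too few degrees of freedom. I expect to eliminate this case by reapplying the Baire-cardinality method of the second paragraph inside the closed Banach sub-$K$-algebra $\overline{K(t)} \subset L$, whose elements are algebraic over the countable field $K(t)$, producing a contradiction between countability and the perfect Polish property. A concrete alternative is to analyze the unique extension of $d/dt$ from $K(t)$ to the algebraic closure of $K(t)$ inside $L$ via the formula $\delta(a) = -p^{\delta}(a)/p'(a)$, exploiting Krasner-type estimates to make $p'(a)$ arbitrarily small and the extension unbounded. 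Reconciling these approaches and filling in the technical details is the most delicate part of the argument.
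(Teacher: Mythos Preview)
Your argument for $(3)\Rightarrow(4)$ contains a genuine error. You assert that $F=\bigcup_n E_n$ is ``the countable field $F$'' and then deduce $|\overline{F}|\leq\aleph_0$ from the fact that algebraic extensions of countable fields are countable. But $F$ contains $K$, and any non-trivially valued complete field is itself uncountable---indeed, by the very Baire argument you invoke, $K$ is a nonempty perfect complete metric space and hence has cardinality $2^{\aleph_0}$. So $|F|\geq 2^{\aleph_0}$ and no contradiction arises. What is true is that $F$ has countable \emph{$K$-dimension}, but this does not pass to the closure $\overline{F}$, and there is no evident way to write $\overline{F}$ as a countable union of proper closed subspaces so as to apply Baire directly.

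This error propagates into your handling of $(1)\Rightarrow(3)$: your first proposed fix for the finite-transcendence-degree case---rerunning the argument inside $\overline{K(t)}$---fails for the identical reason. Your second suggestion (making $p'(a)$ small via Krasner-type estimates) is in spirit what the paper does, but for $(3)\Rightarrow(4)$ rather than $(1)\Rightarrow(3)$: Lemma~\ref{NormsOnFiniteExtensions} compares the given Banach norm on $L$ with the unique multiplicative extension of $|\cdot|$ to $\overline{K}$ via the spectral radius, obtaining $|a|=\rho(a)\leq\|a\|$, and then adapts the argument of \cite[Section~3.4.3]{BGR} (using the quantity $r(a)=\min_{f(b)=0,\,b\neq a}|a-b|$, which needs separability and hence characteristic zero) to show that an infinite algebraic extension cannot be complete. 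For $(1)\Rightarrow(3)$ the paper does something quite different from your transcendence-basis construction: it looks at the integral closure $K_1$ of $K$ in $L$; if $K_1$ is not complete, any element of $\overline{K_1}\setminus K_1$ is transcendental over $K_1$ and supports a discontinuous $K_1$-derivation; if $K_1$ is complete, Lemma~\ref{NormsOnFiniteExtensions} forces $[K_1:K]<\infty$, one reduces to $K=K_1$, picks any transcendental $x\in L$, rescales so that $\exp(x)$ converges, and shows $\exp(x)$ is transcendental over $K(x)$ yet lies in $\overline{K(x)}$, again giving a discontinuous derivation. Your construction with infinitely many independent $t_n$ is correct and pleasant when the transcendence degree is infinite, but the finite case is not handled.
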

\noindent
The proof will be concluded from a series of lemmas. The main technical result of this section (Lemma \ref{NormsOnFiniteExtensions} below) has nothing to do with derivations. It is very well known (see \cite[Lemma 1, Section 3.4.3]{BGR}) when $L$ is nonarchimedean. We show that the nonarchimedeanity assumption is superfluous.
\begin{lem}\label{NormsOnFiniteExtensions}
Assume that the extension $K\subset L$ is algebraic. Then $[L:K]<\infty$ and $L$ is nonarchimedean.
\end{lem}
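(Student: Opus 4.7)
The strategy is to extract from the Banach-algebra structure of $L$ a multiplicative nonarchimedean $K$-absolute value $\rho$ on $L$, show that on every finite subextension of $L/K$ this $\rho$ coincides with the unique multiplicative extension of $|\cdot|_{K}$, and then reduce the finiteness of $[L:K]$ to the classical nonarchimedean version of the result \cite[Lemma 1, Section 3.4.3]{BGR}.

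\medskip

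Concretely, I would first introduce the spectral seminorm
\[
\rho(\alpha)\;:=\;\lim_{n\to\infty}|\alpha^{n}|_{L}^{1/n}\qquad(\alpha\in L).
\]
Standard Fekete-type estimates, together with the observation that $|n|_{K}\leq 1$ in the nonarchimedean setting (which controls the cross-terms in the expansion of $(\alpha+\beta)^{n}$), show that $\rho$ is a well-defined power-multiplicative submultiplicative nonarchimedean $K$-seminorm on $L$ satisfying $\rho\leq|\cdot|_{L}$ and $\rho|_{K}=|\cdot|_{K}$. Next, fix $\alpha\in L$: since $L/K$ is algebraic, $K(\alpha)$ is a finite-dimensional $K$-subspace of $L$, closed by Lemma \ref{ClosedFiniteDimension}. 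The unique multiplicative extension $|\cdot|_{K(\alpha)}$ of $|\cdot|_{K}$ to the finite extension $K(\alpha)$ is classical, and by Proposition \ref{FiniteNorms} there are constants $c,C>0$ with
\[
c\,|\beta|_{K(\alpha)}\;\leq\;|\beta|_{L}\;\leq\;C\,|\beta|_{K(\alpha)}\qquad(\beta\in K(\alpha)).
\]
Taking $n$-th roots and passing to the limit absorbs $c$ and $C$ and yields $\rho(\beta)=|\beta|_{K(\alpha)}$ for every $\beta\in K(\alpha)$. Applied to the finite subextension $K(\alpha,\beta)$ for arbitrary $\alpha,\beta\in L$, this gives $\rho(\alpha\beta)=\rho(\alpha)\rho(\beta)$, so $\rho$ is a multiplicative nonarchimedean $K$-absolute value on all of $L$. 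Consequently $(L,\rho)$ is an algebraic valued field extension of $K$.

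\medskip

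Once $[L:K]<\infty$ is established, Proposition \ref{FiniteNorms} applied to $L$ itself forces $|\cdot|_{L}$ and $\rho$ to be equivalent, which gives the final assertion that $L$ is nonarchimedean. The whole proof therefore reduces to proving $[L:K]<\infty$, and here I expect the main obstacle to lie: the classical result \cite[Lemma 1, Section 3.4.3]{BGR} applies as soon as $(L,\rho)$ is $\rho$-complete, but the inequality $\rho\leq|\cdot|_{L}$ only guarantees that $|\cdot|_{L}$-Cauchy sequences are $\rho$-Cauchy, not the converse, so Banach completeness does not transfer directly. Two natural routes to bridge this gap: (i) embed $(L,\rho)$ isometrically in the completed algebraic closure $\widehat{\overline{K}}$ and use Krasner's lemma together with the Banach completeness of $L$ to argue that $L$ is already $\rho$-closed there, forcing $\rho$-completeness; or (ii) in the countably generated case, write $L=\bigcup_{n}L_{n}$ as an ascending union of finite subextensions, each closed in $L$ by Lemma \ref{ClosedFiniteDimension}, and proper in $L$ if $[L:K]=\infty$, which contradicts the Baire property of the complete metric space $(L,|\cdot|_{L})$.
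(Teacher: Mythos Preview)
Your setup is essentially the paper's: you extract the spectral seminorm $\rho$, show that on each finite subextension it agrees with the unique multiplicative extension of $|\cdot|_{K}$, and hence that $\rho$ is a genuine multiplicative absolute value on $L$ dominated by $|\cdot|_{L}$. You also correctly identify where the difficulty lies, namely that $\rho\leq|\cdot|_{L}$ does not transfer Banach completeness to $\rho$-completeness, so \cite[Lemma~1, Section~3.4.3]{BGR} cannot be quoted as a black box.

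The gap is that neither of your two proposed bridges actually closes. Route~(ii) requires $L/K$ to be countably generated, which is not part of the hypotheses and can fail. Route~(i) is more subtle but also does not work as stated: Krasner's lemma, applied to an algebraic $\alpha_{n}\in L$ and a nearby $\alpha\in\widehat{\overline{K}}$, gives $K(\alpha_{n})\subset K(\alpha)$, i.e.\ the inclusion goes the wrong way for concluding $\alpha\in L$. There is no reason for a $\rho$-limit of elements of $L$ to remain algebraic over $K$, so $L$ need not be $\rho$-closed in $\widehat{\overline{K}}$, and you cannot deduce $\rho$-completeness this way.

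The paper's resolution avoids $\rho$-completeness altogether. It opens up the proof of \cite[Lemma~1, Section~3.4.3]{BGR} rather than invoking the statement. In that construction one chooses $K$-linearly independent $x_{i}\in L$ and scalars $c_{i}\in K^{\times}$ subject only to \emph{upper} bounds on $|c_{i}x_{i}|$; since $K$ is nontrivially valued, one may additionally impose $\|c_{i}x_{i}\|_{L}\to 0$. The partial sums $a_{k}=\sum_{i\leq k}c_{i}x_{i}$ are then $\|\cdot\|_{L}$-Cauchy and hence converge in $L$, but the BGR argument shows they have no limit in $(\overline{K},|\cdot|)$. This contradicts the continuity of the identity map $(L,\|\cdot\|_{L})\to(\overline{K},|\cdot|)$, which is exactly your inequality $\rho\leq\|\cdot\|_{L}$. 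So the missing idea is: do not try to upgrade $L$ to $\rho$-completeness; instead exploit the freedom in the BGR sequence to make it Cauchy for the \emph{stronger} norm $\|\cdot\|_{L}$.
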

\begin{proof}
Let us fix a nonarchimedean norm $|\ |$ on $K$ and let $\| \ \|$ be a Banach $K$-algebra norm on $L$ compatible with this norm. It is well-known (cf. \cite[Theorem 2, Section 3.2.4]{BGR}) that for every algebraic extension $K\subset K'$ there exists a unique multiplicative $K$-algebra norm on $K'$. In particular, if we fix an algebraic closure $\overline{K}$ of $K$ containing $L$, then there exists a unique multiplicative $K$-norm on $\overline{K}$ that we denote $| \ |$. For $a\in L$ the \textit{spectral radius} of $a$, defined as
\begin{equation}\label{Spectral_Radius}
\rho(a)\stackrel{\textnormal{def.}}{=}\lim_{n\to\infty}\|a^{n}\|^{\frac{1}{n}}=\inf_{n\geq1}\|a^{n}\|^{\frac{1}{n}}
\end{equation}
depends only on the equivalence class of the norm. The second equality above follows from Fekete's lemma \cite[Lemma 6.1.4]{Kedlaya_book}. Since by Proposition \ref{FiniteNorms} any two nonarchimedean $K$-norms on the finite dimensional $K$-vector space $K[a]$ are equivalent, we see that
\begin{equation}\label{NormInequality}
|a|=\rho(a)\leq\|a\|.
\end{equation}
In particular,
\begin{equation}\label{ContinuousEmbedding}
\textit{The inclusion }(L,\| \ \|)\to (\overline{K}, | \ |) \textit{ is continuous}.
\end{equation}
We now mimic the proof of \cite[Lemma 1, Section 3.4.3]{BGR} to show that $[L:K]<\infty$ . For an element $a\in \overline{K}\setminus K$ we let $f\in K[x]$ be the minimal polynomial of $a$ and we define as in \cite[Section 3.4.2, page 148]{BGR}
\begin{equation}\label{Root_distance}
r(a)\stackrel{\textnormal{def.}}{=}\min_{f(b)=0,b\neq a}|a-b|.
\end{equation}
If $[L:K]=\infty$, then there exists an infinite sequence  $1=x_{0},x_{1},x_{2},\dots$ of elements of $L$ linearly independent over $K$, and nonzero scalars $c_{0},c_{1},c_{2},\dots\in K$, such that
\begin{enumerate}
    \item $| c_{i+1}x_{i+1}|\leq | c_{i}x_{i} |$, $\lim _{i\to\infty}\|c_{i}x_{i}\|=0$,

    \item $ |c_{i+1}x_{i+1} |\leq r\left(\sum_{j=1}^{i} c_{j}x_{j}\right)$.
\end{enumerate}
Then (\ref{NormInequality}) implies that $\lim_{i\to\infty}|c_{i}x_{i}|=0$ and therefore, by (the proof of) \cite[Lemma 1, Section 3.4.3]{BGR} the sequence
\[
a_{k}=\sum_{i=0}^{k}c_{i}x_{i}
\]
does not have a limit in $(\overline{K},| \ |)$. Therefore, $(L,\| \ \|)$ cannot be complete, because if it were, then the assumption $\lim _{i\to\infty}\|c_{i}x_{i}\|=0$ would force $a_{k}$ to have a limit in $L$ and this would contradict (\ref{ContinuousEmbedding}). Finally, if $L$ is a finite extension of $K$ then the unique multiplicative $K$-algebra norm on $L$ is equivalent to $\| \ \|$ by Proposition \ref{FiniteNorms} and therefore $L$ is nonarchimedean.
\end{proof}
\begin{rmk}
We do not know if the above lemma holds when $\textnormal{char }K=p>0$. Note that if the extension $K\subset L$ is purely inseparable then the value (\ref{Root_distance}) is not well-defined and hence the above proof does not carry to the positive characteristic. In \cite[Lemma 1, Section 3.4.3]{BGR} it is shown in any characteristic that if the extension $K\subset\overline{K}$ is infinite then $\overline{K}$ is not nonarchimedean with respect to the unique multiplicative nonarchimedean norm extending the one on $K$. The proof is reduced to separable extensions by observing that if $K\subset \overline{K}$ is infinite the so is $K\subset K_{\textnormal{sep}}$, where $K_{\textnormal{sep}}$ is the separable closure of $K$.
\end{rmk}
The following is well known and remains true for any extension of fields in characteristic zero. We omit the proof.
\begin{lem}\label{Extending_Derivations}
Let $K\subset K'\subset L$ be fields of characteristic zero. 
\begin{enumerate}
\item Any $K$-linear derivation of $K'$ can be extended to a $K$-linear derivation of $L$.
\item If $x\in L$ is transcendental over $K$, then there exists a $K$-linear dervation $\delta_{x}:L\to L$ such that $\delta_{x}(x)=1$.
\end{enumerate}
\end{lem}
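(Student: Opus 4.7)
My plan is to prove part (1) first by a standard two-step extension, and then derive part (2) as an easy corollary.

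For part (1), I would split the extension $K' \subseteq L$ into a purely transcendental part followed by an algebraic part. First, pick a transcendence basis $B$ of $L$ over $K'$ and consider the intermediate field $K'(B)$. The derivation $\delta$ on $K'$ extends uniquely to $K'(B)$ once one prescribes its values on $B$; I will use the extension with $\delta(b) = 0$ for all $b \in B$. This extension is defined first on the polynomial ring $K'[B]$ by the Leibniz rule, where a term $c\,b_{i_1}^{e_1}\cdots b_{i_r}^{e_r}$ is sent to $\delta(c)\,b_{i_1}^{e_1}\cdots b_{i_r}^{e_r}$; well-definedness is routine. One then extends to the field of fractions $K'(B)$ via the quotient rule $\delta(f/g) = (\delta(f)g - f\delta(g))/g^{2}$.

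For the second step, I must extend a derivation $D$ on $F := K'(B)$ to a derivation on $L$, which is algebraic over $F$ (and automatically separable since $\textnormal{char}\,K = 0$). I would apply Zorn's lemma to the poset $\Pi$ of pairs $(E, D_{E})$ with $F \subseteq E \subseteq L$ and $D_{E}$ a $K$-linear derivation of $E$ restricting to $D$ on $F$, ordered by inclusion and extension. Chains have upper bounds given by directed unions, so $\Pi$ has a maximal element $(E_{0}, D_{0})$. The key point is that if $E_{0} \neq L$, one can pick $\alpha \in L \setminus E_{0}$ with minimal polynomial $f \in E_{0}[T]$, and extend $D_{0}$ to $E_{0}(\alpha) = E_{0}[T]/(f)$ by the forced formula
\[
D_{0}(\alpha) = -\frac{f^{D_{0}}(\alpha)}{f'(\alpha)},
\]
where $f^{D_{0}}$ denotes $f$ with $D_{0}$ applied to its coefficients. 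The denominator is nonzero because $f'(\alpha) \neq 0$ (characteristic zero), and a direct check shows the resulting map on $E_{0}[T]$ kills $(f)$ and satisfies Leibniz, contradicting maximality. Hence $E_{0} = L$.

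Part (2) follows immediately: given a transcendental $x \in L$, the subfield $K(x)$ is purely transcendental of transcendence degree one, and the usual formal derivative $d/dx$ defines a $K$-linear derivation on $K(x)$ with $(d/dx)(x) = 1$. Apply part (1) to extend this derivation to all of $L$.

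The main obstacle is verifying that the tentative formula for $D_{0}(\alpha)$ in the algebraic step produces a genuinely well-defined, Leibniz-satisfying derivation on $E_{0}(\alpha)$. Everything else (the transcendental extension, the Zorn step, the deduction of (2)) is formal once this computation is in place; the remainder of the argument is standard commutative algebra carried out in characteristic zero where separability removes all subtleties.
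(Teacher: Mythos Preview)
Your proof is correct and follows the standard route; the paper itself omits the proof entirely, citing the result as well known in characteristic zero. There is nothing to compare against beyond noting that your argument is exactly the textbook one the author is implicitly invoking.
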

We can now prove Proposition \ref{DD_p=0}.

\begin{proof}[Proof of Proposition \ref{DD_p=0}]
Clearly (4)$\implies$(3) and since $\textnormal{char }K=0$ (3)$\implies$(2). The implication (2)$\implies$(1) is also clear and by Lemma \ref{NormsOnFiniteExtensions} (3)$\implies$(4). The moreover part follows from Lemma \ref{NormsOnFiniteExtensions}. Therefore, we only need to show that (1)$\implies$(3).
\medskip

Suppose that the extension $K\subset L$ is not algebraic. We will demonstrate that there exists a discontinuous $K$-linear derivation of $L$. Let $K_{1}$ denote the integral closure of $K$ in $L$. It is a normed $K$-algebra under the norm restricted from $L$ and it is a field. We consider two cases.
\medskip

\noindent
\textit{Case 1: $K_{1}$ is not complete}. Let $x\in L$ be an element that lies in the topological closure of $K_{1}$ but not in $K_{1}$ itself. By assumption this element is transcendental over $K_{1}$ and thus by Lemma \ref{Extending_Derivations} there exists $K_{1}$-linear (and in particular $K$-linear) derivation $\delta_{x}:L\to L$ such that $\delta_{x}(x)=1$. Such derivation cannot be continuous because $\delta_{x|K_{1}}=0$ and $x$ is in the closure of $K_{1}$.
\medskip

\noindent
\textit{Case 2: $K_{1}$ is complete}. In this situation by Lemma \ref{NormsOnFiniteExtensions} the extension $K\subset K_{1}$ has to be finite and moreover $K_{1}$ is nonarchimedean. Since the extension $K_{1}\subset L$ is still infinite we may replace $K$ with $K_{1}$ and we are reduced to the case when $K=K_{1}$ is integrally closed in $L$. Let $x\in L\setminus K$ be any element. Since the norm on $K$ is non-trivial, we may replace $x$ by $\pi x$ for some $\pi \in K^{\times}$ with $|\pi|$ small enough to ensure that the sequence
\[
a_{k}=\sum_{n=0}^{k}\frac{x^{n}}{n!}
\]
is a Cauchy sequence in $K(x)$. We set
\[
\exp(x)=\lim_{k\to\infty} a_{k}\in L.
\]
Since $x$ is transcendental over $K$, the field $K(x)$ is abstractly isomorphic to the field of rational functions in one variable over $K$. Let $\partial$ denote the $K$-linear derivation of $K(x)$ defined by $\partial(x)=1$. By lemma \ref{Extending_Derivations} we may extend $\partial$ to a derivation of $K(x)(\exp(x))$ and further to a derivation of $L$. If $\partial$ is a discontinuous derivation of $K(x)(\exp(x))$, then we reach a contradiction. Therefore, we may assume that it is continuous and therefore
\[
\partial(a_{k})=a_{k-1},\qquad\partial(\exp(x))=\exp(x).
\]
However, it is well-known (and follows from a simple computation with degrees of rational functions) that if $(K(x),\partial)\subset D$ is an extension of differential fields then any element $f\in D$ which satisfies the differential equation
\[
\partial f=cf,\qquad c\in\ZZ\setminus\{0\}
\]
is transcendental over $K(x)$. It follows that $\exp(x)$ is transcendental over $K(x)$. On the other hand, it lies in the topological closure of $K(x)$ in $L$. By Lemma \ref{Extending_Derivations} there exists a $K(x)$-linear derivation $\delta:L\to L$ with $\delta(\exp(x))=1$. This derivation cannot be continuous, so we are done. 
\end{proof}

\section{Discontinuous differential operators}

In this section, we use Proposition \ref{DD_p=0} to `construct' discontinuous differential operators. For the rest of this section we fix a nonarchimedean field $K$ of characteristic zero, a commutative, noetherian Banach $K$-algebra $A$ and two finitely generated $A$-modules $M,\ N$.
\begin{prop}\label{DDD=0}
With the notation and under the assumptions above assume moreover that there exists a maximal ideal $\mm\subset A$ with the following properties:
\begin{enumerate}
    \item $\mm\in \textnormal{Supp}(M)\cap\textnormal{Ass}_{A}(N),$

    \item $[A/\mm:K]=\infty$.
\end{enumerate}
Then there exists a discontinuous $K$-linear differential operator $P:M\to N$.
\end{prop}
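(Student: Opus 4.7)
The plan is to combine Lemma \ref{SupCupAss}, Proposition \ref{DD_p=0}, and Lemma \ref{CompositionLemma} in the most direct way possible. By the first part of Lemma \ref{SupCupAss}, since $\mm \in \textnormal{Supp}(M)$, there is a surjective $A$-linear map $\pi : M \twoheadrightarrow A/\mm$, and by the second part, since $\mm \in \textnormal{Ass}_{A}(N)$, there is an injective $A$-linear map $\epsilon : A/\mm \hookrightarrow N$. Since $A$ is a noetherian Banach $K$-algebra, the ideal $\mm$ is closed (every ideal is closed by the proposition cited from \cite[Proposition 2, Page 164]{BGR}), so $L := A/\mm$ inherits a Banach $K$-algebra structure, and it is a field because $\mm$ is maximal.

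Next I would feed $L$ into Proposition \ref{DD_p=0}. By hypothesis $[L:K]=\infty$, so condition (4) of that proposition fails, and hence condition (1) must fail as well: there exists a $K$-linear derivation $\delta : L \to L$ which is not continuous with respect to the Banach $K$-algebra structure on $L$. Applying Lemma \ref{CompositionLemma} to the triple $(\pi,\epsilon,\delta)$, the composition
\[
P \;=\; \epsilon \circ \delta \circ \pi \;:\; M \;\longrightarrow\; N
\]
is a $K$-linear differential operator in $\cD_{A/K}^{\leq 1}(M,N)$.

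It then remains to verify that $P$ is discontinuous, and this is where one must take a little care with topologies. By Proposition \ref{Fundamental_Finite_Modules}(4) the surjection $\pi$ is a topological quotient map, and by Proposition \ref{Fundamental_Finite_Modules}(3) the injection $\epsilon$ is a topological embedding, since the canonical topology on the submodule $\epsilon(L)\subset N$ coincides with the subspace topology. Consequently, if $P$ were continuous, then by the universal property of the quotient $\pi$ the map $\epsilon\circ\delta : L \to N$ would be continuous, and then by the embedding property of $\epsilon$ the derivation $\delta : L \to L$ would be continuous itself, contradicting the choice of $\delta$.

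I do not expect any real obstacle here; the main point is just to line up the topological data correctly, in particular to observe that $L=A/\mm$ genuinely is a Banach $K$-algebra (so that Proposition \ref{DD_p=0} applies) and that the Banach topology on $L$ coming from the quotient norm agrees, via $\pi$ and $\epsilon$, with the relevant subspace/quotient topologies on $M$ and $N$, which is precisely what Proposition \ref{Fundamental_Finite_Modules} provides.
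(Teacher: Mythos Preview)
Your proposal is correct and follows essentially the same route as the paper: obtain $\pi$ and $\epsilon$ from Lemma \ref{SupCupAss}, a discontinuous derivation $\delta$ of $A/\mm$ from Proposition \ref{DD_p=0}, assemble $P=\epsilon\delta\pi$ via Lemma \ref{CompositionLemma}, and deduce discontinuity of $P$ from Proposition \ref{Fundamental_Finite_Modules} (3) and (4). If anything, you are slightly more explicit than the paper in justifying that $A/\mm$ carries a Banach $K$-algebra structure so that Proposition \ref{DD_p=0} applies.
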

\begin{rmk}\label{1stImplication}
Clearly, the above proposition proves the implication (2)$\implies$(1) in Theorem \ref{MainThm2}. Moreover, given any maximal ideal $\mm\subset A$ we may take $M=A,\ N=A/\mm$ to force the condition $\mm\in\textnormal{Supp}(M)\cap\textnormal{Ass}_{A}(M)$ to hold. Therefore, the proposition also proves the implication (1)$\implies$(2) in Theorem \ref{MainThm1}.
\end{rmk}
\begin{proof}[Proof of Proposition \ref{DDD=0}]
Let $\mm\subset A$ be a maximal ideal with $\mm\in\textnormal{Supp}(M)\cap\textnormal{Ass}_{A}(N)$. By Lemma \ref{SupCupAss} we have an $A$-linear injection
\[
\epsilon:A/\mm\to N,
\]
and an $A$-linear surjection
\[
\pi:M\to A/\mm.
\]
If moreover $[A/\mm:K]=\infty$ then it follows from Proposition \ref{DD_p=0} that there exists a discontinuous $K$-linear derivation 
\[
\delta:A/\mm\to A/\mm.
\]
By Lemma \ref{CompositionLemma} 
\[
P=\epsilon\delta\pi
\]
is a $K$-linear differential operator and we are  working to show that it is \textit{not} continuous. As $\epsilon$ is $A$-linear, it is a homeomorphism onto a closed subset by Proposition \ref{Fundamental_Finite_Modules} (3) and thus $P$ is continuous if and only if $\delta\pi$ is continuous. On the other hand, by Proposition \ref{Fundamental_Finite_Modules} (4), $\pi$ is a quotient and therefore $\delta\pi$ is continuous if and only if $\delta$ is continuous. This shows that $P$ is not continuous.
\end{proof}
\section{A criterion for automatic continuity}

In this section, we prove a criterion for automatic continuity of differential operators which provides the (1)$\implies$(2) part of Theorem \ref{MainThm2} and therefore also the (2)$\implies$(1) part of Theorem \ref{MainThm1}. For the rest of the section we fix a nonarchimedean field $K$ (of arbitrary characteristic), a commutative, noetherian, Banach $K$-algebra $A$, and two finitely generated $A$-modules $M,\ N$.

\begin{prop}\label{ACC1}
With the above notation assume moreover that there exists a discontinuous differential operator from $M$ to $N$. Then there exists a maximal ideal $\mm\subset A$ with the following properties:
\begin{enumerate}

\item $\mm\in\textnormal{Supp}(M)\cap\textnormal{Ass}_{A}(N)$,

\item $[A/\mm:K]=\infty$.
\end{enumerate}
\end{prop}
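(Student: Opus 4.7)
The plan is to argue by induction on the order of the discontinuous operator $P\in\cD_{A/K}^{\leq n}(M,N)$. The base case $n=0$ is vacuous: an order-zero operator is $A$-linear and hence automatically continuous by Proposition \ref{Fundamental_Finite_Modules}(2). For the inductive step, if some commutator $[P,a]$ happens to be discontinuous, then $[P,a]\in\cD_{A/K}^{\leq n-1}(M,N)$ and the inductive hypothesis applied to $[P,a]$ produces the maximal ideal we want. We may therefore assume for the rest of the argument that $[P,a]$ is continuous for every $a\in A$; this is the substantive case.

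Under this assumption, Lemma \ref{A(varphi)}(2) identifies $\mathfrak{a}:=\mathfrak{C}(P)=\mathfrak{A}(P)=\textnormal{Ann}_{A}(\mathfrak{S}(P))$, and a quick check using the identity $aP(x)=P(ax)-[P,a](x)$ (combined with the continuity of $[P,a]$ and of multiplication by $a$) shows that $\mathfrak{S}(P)$ is an $A$-submodule of $N$; by noetherianity it is a finitely generated $A$-module whose annihilator is exactly $\mathfrak{a}$. The pivotal step is then to invoke Lemma \ref{Jewell-Sinclair-Nakayama}, which supplies, for every $a\in A$, elements $b\in A$ and $k\geq 0$ with $(1-ab)a^{k}\in\mathfrak{a}$. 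Reducing modulo any prime $\pp\supset\mathfrak{a}$ forces each element of $A/\pp$ to be either a unit or zero, so $A/\pp$ is a field and $\pp$ is maximal. I expect this passage from the Jewell--Sinclair--Nakayama relation to the zero-dimensionality of $A/\mathfrak{a}$ to be the main obstacle in the argument.

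Once every prime over $\mathfrak{a}$ is maximal, the noetherian ring $A/\mathfrak{a}$ is Artinian, with finitely many maximal ideals $\mm_{1},\dots,\mm_{r}$ and a decomposition $A/\mathfrak{a}\cong\prod_{i=1}^{r}B_{i}$ into Artinian local rings with residue fields $A/\mm_{i}$. Each $\mm_{i}$ is a minimal prime over $\textnormal{Ann}_{A}(\mathfrak{S}(P))$, hence an associated prime of $\mathfrak{S}(P)$; since $\mathfrak{S}(P)\subset N$, this forces $\mm_{i}\in\textnormal{Ass}_{A}(N)$. On the other side, Lemma \ref{Continuity_Ideal_Contains_Ann(M)} gives $\textnormal{Ann}_{A}(M)\subset\mathfrak{C}(P)=\mathfrak{a}\subset\mm_{i}$, so $\mm_{i}\in\textnormal{Supp}(M)$ as well.

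To finish, note that Lemma \ref{CofiniteContinuityIdeal} together with the discontinuity of $P$ forces $\dim_{K}A/\mathfrak{a}=\infty$, equivalently $\sum_{i}\dim_{K}B_{i}=\infty$. Each $B_{i}$ is an Artinian local ring with residue field $A/\mm_{i}$, so admits a finite composition series whose factors are all isomorphic to $A/\mm_{i}$; consequently $\dim_{K}B_{i}$ is finite whenever $[A/\mm_{i}:K]$ is. Therefore at least one $[A/\mm_{i}:K]$ must be infinite, and the corresponding $\mm_{i}$ satisfies all the required conditions.
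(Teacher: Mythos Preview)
Your argument is correct, and the overall skeleton (reduce to the case where all commutators $[P,a]$ are continuous, then exploit $\mathfrak{C}(P)=\mathfrak{A}(P)=\textnormal{Ann}_A(\mathfrak{S}(P))$ together with Lemmas \ref{Jewell-Sinclair-Nakayama} and \ref{CofiniteContinuityIdeal}) coincides with the paper's. The route to the maximal ideal, however, is genuinely different.

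The paper does not work with an arbitrary discontinuous $P$; instead it first passes to the minimal order $d$ admitting a discontinuous operator, and then among all discontinuous $\varphi\in\cD^{\leq d}_{A/K}(M,N)$ picks one whose continuity ideal $\mathfrak{C}(\varphi)$ is \emph{maximal} (noetherianity). A short direct argument (Lemma \ref{PrimeDiscontinuousIdeal}) shows that this particular $\mathfrak{C}(\varphi)$ is already prime, and the Jewell--Sinclair--Nakayama relation then forces it to be maximal. Thus the paper produces $\mm=\mathfrak{C}(\varphi)$ in one shot: the infinite residue degree follows immediately from Lemma \ref{CofiniteContinuityIdeal}, and $\mm\in\textnormal{Ass}_A(N)$ is obtained by picking any nonzero $y\in\mathfrak{S}(\varphi)$ and observing $\mm\subset\textnormal{Ann}_A(y)\subsetneq A$.

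Your approach avoids the maximal-continuity-ideal trick entirely: you keep the original $P$, show via Jewell--Sinclair--Nakayama that \emph{every} prime over $\mathfrak{a}=\mathfrak{C}(P)$ is maximal (so $A/\mathfrak{a}$ is Artinian), and then extract the desired $\mm_i$ from the finite product decomposition by a composition-series/dimension count. This trades the ad hoc primality argument of Lemma \ref{PrimeDiscontinuousIdeal} for standard commutative algebra, at the cost of two extra ingredients the paper does not need: the observation that $\mathfrak{S}(P)$ is an $A$-submodule (so that ``minimal prime over the annihilator is associated'' applies), and the length argument linking $\dim_K B_i$ to $[A/\mm_i:K]$. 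Both routes rest on the same analytic lemmas; yours is a bit more structural, the paper's a bit more direct.
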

We now prove the above proposition. We assume that there exists a discontinuous differential operator in $\cD_{A/K}(M,N)$. Then there exists a minimal integer $d$ such that $\cD_{A/K}^{\leq d}(M,N)$ contains a discontinuous differential operator. This integer $d$ remains fixed for the rest of this section. Recall that for a $K$-linear map $\varphi:M\to N$ we can consider its separating space $\mathfrak{S}(\varphi)$, its continuity ideal $\mathfrak{C}(\varphi)$, and the annihilator $\mathfrak{A}(\varphi)$ introduced in the previous section as (\ref{SeparatinSpace}), (\ref{ContinuityIdeal}), and (\ref{AnnihilatorSpace}) respectively. These objects are crucial in the proof of Proposition \ref{ACC1}. Consider the family of ideals of $A$
\[
\textnormal{Discont}_{d}(M,N)=\{\mathfrak{C}(\varphi):\varphi\in\cD^{\leq d}_{A/K}(M,N)\textnormal{ is discontinuous}\}.
\]
\begin{lem}\label{PrimeDiscontinuousIdeal}
With the notation and under the assumptions above:
\begin{enumerate}
\item $\textnormal{Discont}_{d}(M,N)$ contains a maximal element.
\item Every maximal element of $\textnormal{Discont}_{d}(M,N)$ is a prime ideal of $A$.
\end{enumerate}
\end{lem}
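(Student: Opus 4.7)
The plan is to deduce (1) directly from noetherianity and to prove (2) by the classical trick of enlarging a maximal continuity ideal through a \emph{twist}, mirroring the Bade--Curtis--Ringrose argument from complex functional analysis.

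For part (1), the family $\textnormal{Discont}_d(M,N)$ is nonempty because the hypothesis supplies some discontinuous $\varphi\in\cD_{A/K}^{\leq d}(M,N)$, and every member of the family is a proper ideal: if $1\in\mathfrak{C}(\varphi)$ then $\varphi=\varphi\cdot1$ would be continuous, a contradiction. Noetherianity of $A$ then yields a maximal element with respect to inclusion.

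For part (2), let $I=\mathfrak{C}(\varphi)$ be a maximal element of $\textnormal{Discont}_d(M,N)$ and suppose $ab\in I$ with $a\notin I$; the goal is to deduce $b\in I$. The key move is to introduce the twisted operator $\psi:=\varphi a$, that is, $\psi(m)=\varphi(am)$. Since multiplication by $a$ on $M$ is $A$-linear and hence lies in $\cD_{A/K}^{\leq 0}(M,M)$, the composition rule for differential operators (Tag 09CJ) places $\psi$ in $\cD_{A/K}^{\leq d}(M,N)$. Moreover $\psi$ is discontinuous precisely because $a\notin\mathfrak{C}(\varphi)=I$, so $\mathfrak{C}(\psi)\in\textnormal{Discont}_d(M,N)$.

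It then remains to check that $\mathfrak{C}(\psi)\supseteq I+(b)$, after which maximality of $I$ forces $\mathfrak{C}(\psi)=I$, yielding $b\in I$. For any $c\in I$, the map $\psi c:m\mapsto\varphi(acm)=(\varphi c)(am)$ is the composition of the continuous operator $\varphi c$ with multiplication by $a$ on $M$, continuous by Proposition \ref{Fundamental_Finite_Modules}, so $c\in\mathfrak{C}(\psi)$; and $\psi b:m\mapsto\varphi(ab\cdot m)$ is continuous because $ab\in I$, so $b\in\mathfrak{C}(\psi)$. I do not anticipate any serious obstacle: the argument is bookkeeping from the definitions together with the composition rule, the only real content being that replacing $\varphi$ by $\varphi a$ preserves differential-operator order while shifting the continuity ideal in a controlled way.
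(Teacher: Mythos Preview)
Your proof is correct and follows essentially the same approach as the paper: both use noetherianity for (1), and for (2) both introduce the twisted operator $\varphi a$, observe it is a discontinuous element of $\cD_{A/K}^{\leq d}(M,N)$ whose continuity ideal contains $\mathfrak{C}(\varphi)$, and conclude by maximality that $b\in\mathfrak{C}(\varphi)$. Your version adds a couple of details the paper leaves implicit (properness of the ideals in the family, and the composition rule to justify $\varphi a\in\cD_{A/K}^{\leq d}$), but the argument is the same.
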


\begin{proof}
By assumption $\textnormal{Discont}_{d}(M,N)$ is nonempty. Therefore (1) follows from the noetherianity of $A$ as every nonempty family of ideals in a noetherian ring has a maximal element. To prove (2) let us fix $\varphi_{0}$ such that $\mathfrak{C}(\varphi_{0})$ is maximal in $\textnormal{Discont}_{d}(M,N)$ and assume that $ab\in\mathfrak{C}(\varphi_{0})$ but $a\notin\mathfrak{C}(\varphi_{0})$. This means that $\varphi_{0}a$ is a discontinuous element of $\cD_{A/K}^{\leq d}(M,N)$ and thus $\mathfrak{C}(\varphi_{0}a)\in\textnormal{Discont}_{d}(M,N)$. Clearly $\mathfrak{C}(\varphi_{0})\subset \mathfrak{C}(\varphi_{0}a)$ because if for some $c\in A$ the map $\varphi_{0}c$ is continuous then so is $\varphi_{0}ca$. By maximality we get that $\mathfrak{C}(\varphi_{0})= \mathfrak{C}(\varphi_{0}a)$. Since $\varphi_{0}ab$ is continuous by the initial assumption, we conclude that $b\in\mathfrak{C}(\varphi_{0}a)=\mathfrak{C}(\varphi_{0})$ and thus $\mathfrak{C}(\varphi_{0})$ is prime.
\end{proof}

\begin{rmk}[cf. Remark \ref{ContinuityIdealMeasuresDiscontinuity}]
Intuitively $\varphi_{0}$ introduced in the proof of the above Lemma is the `least discontinuous' element among discontinuous elements of $\cD_{A/K}^{\leq d}(M,N)$.
\end{rmk}

\begin{proof}[Proof of Proposition \ref{ACC1}]
Let $\varphi\in\cD_{A/K}^{\leq d}(M,N)$ be a discontinuous operator such that $\mathfrak{C}(\varphi)$ is a maximal element of $\textnormal{Discont}_{d}(M,N)$. Denote $\mm=\mathfrak{C}(\varphi)$. We are working to show that $\mm$ is a maximal ideal of $A$ with $\dim_{K}A/\mm=\infty$, which is contained in $\textnormal{Supp}(M)\cap\textnormal{Ass}_{A}(N)$. By lemma \ref{PrimeDiscontinuousIdeal} we already know that $\mm$ is a prime ideal. Note that our choice of $d$ ensures that every element of $\cD_{A/K}^{\leq d-1}(M,N)$ is continuous and, in particular, $[\varphi,a]$ is continuous for every $a\in A$. From Lemma \ref{A(varphi)} we conclude that 
\begin{equation}\label{m=Ann}
\mm=\mathfrak{C}(\varphi)=\mathfrak{A}(\varphi)=\textnormal{Ann}_{A}(\mathfrak{S}(\varphi)).
\end{equation}
The proof breaks into several steps.
\medskip

\noindent
\textit{Step 1. $\mm$ is maximal}. Assume conversely. Since $\mm$ is not maximal, there exists a nonzero element $\overline{a}\in A/\mm$ which is not a unit. Let $a\in A$ be a lift of this element.  We have already noticed that $[\varphi,a]$ is continuous and therefore by Lemma \ref{Jewell-Sinclair-Nakayama} there exists $b\in A$ and an integer $k\geq0$ such that 
\[
(1-ab)a^{k}\mathfrak{S}(\varphi)=0.
\]
By (\ref{m=Ann}) we have $(1-ab)a^{k}\in\mm$ and since this ideal is prime, either $a\in \mm$ or $(1-ab)\in\mm$. However, neither of these two possibilities may occur because then $\overline{a}$ would be either zero or a unit in $A/\mm$ contradicting the initial choice of $\overline{a}$. This shows that $\mm$ must be a maximal ideal of $A$.
\medskip

\noindent
\textit{Step 2. $\dim_{k}A/\mm=\infty$}. Since $\mm$ is the continuity ideal of $\varphi$, if this dimension was finite then $\varphi$ would be continuous by Lemma \ref{CofiniteContinuityIdeal}.
\medskip

\noindent
\textit{Step 3. $\mm\in\textnormal{Supp}(M)\cap \textnormal{Ass}_{A}(N)$}. To show that $\mm$ is contained in the support of $M$ it suffices to show that $\textnormal{Ann}_{A}(M)\subset \mm$ (cf. Section \ref{AlgebraicPleriminariesSection}). This follows from Lemma \ref{Continuity_Ideal_Contains_Ann(M)} because $\mm$ is the continuity ideal of $\varphi$. To show that $\mm$ is an associated prime of $N$ it suffices to find an element $y\in N$ with $\textnormal{Ann}_{A}(y)=\mm$. To do so let us take $y$ to be any nonzero element in $\mathfrak{S}(\varphi)$. Such element must exist because otherwise $\varphi$ would be continuous by Lemma~\ref{SeparatingProperties}~(1). Since $\mm=\textnormal{Ann}_{A}(\mathfrak{S}(\varphi))$ by (\ref{m=Ann}), we see that $\mm y=0$ and therefore $\mm\subset \textnormal{Ann}_{A}(y)$. However, $\mm$ is maximal and the annihilator of $y$ is a proper ideal of $A$ because the unit $1\in A$ does not kill $y$. Therefore, $\mm=\textnormal{Ann}_{A}(y)$ and we are done.  
\end{proof}

\section{Examples}

In the final section, we provide some examples of where the results of this note may be applied. Throughout this section $K$ is a nonarchimedean field of arbitrary characteristic.
\medskip

\begin{ex}[Tate vs. Berkovich]\label{Tate_vs_Berkovich}
If $A$ is an affinoid $K$-algebra in the sense of Tate (cf. \cite[Definition 1, Section 6.1.1]{BGR}) then it is known (see \cite[Section 6.1.2, Corollary 3]{BGR}) that $A/\mm$ is a finite extension of $K$ for every maximal $\mm\subset A$. Therefore, if $A$ is affinoid in the sense of Tate then every $K$-linear differential operator between any two finitely generated $A$-modules is continuous by Theorem \ref{MainThm1}. On the other hand, If $A$ is affinoid in the broader sense of Berkovich (cf. \cite[Definition 2.1.1]{Berkovich_book}) then it may happen that $A/\mm$ is not a finite extension of $K$ (see \cite[Remark 2.12]{Kedlaya_fields}) and therefore continuity of differential operators is not automatic in this setting.
\end{ex}

\begin{ex}[Domains]
If a $K$-algebra $A$ is a domain which is not a field, $M,N$ are finitely generated $A$-modules and $N$ is torsion-free, then any $K$-linear differential operator $P:M\to N$ is continuous. This claim follows from Theorem \ref{MainThm2} because then $(0)$ is the only associated prime of $N$ and it is not a maximal ideal of $A$. Note that the characteristic of $K$ is arbitrary.
\end{ex}

\begin{ex}[discontinuous derivations]
The following provides an example of a phenomenon that at the first may seem counter-intuitive. By Lemma \ref{DD_p=0} there exists a discontinuous $\QQ_{p}$-linear derivation of $\CC_{p}$, and it is an exercise to show that in fact every $\QQ_{p}$-linear derivation of $\CC_{p}$ is either zero or discontinuous. On the other hand, every $\QQ_{p}$-linear derivation of the Tate algebra $\CC_{p}\langle x\rangle$ is continuous by the previous example.
\end{ex}

\begin{ex}[Complex setting]
Theorem \ref{MainThm1} holds also for complex Banach algebras. Indeed, a careful reader may have observed that in the proof of of implication (2)$\implies$(1) we have merely used the noetherianity of the Banach $K$-algebra $A$ (and the nonarchimedeanity of $K$ does not play a role), and the implication (1)$\implies$(2) is automatic over $\CC$ by the Gelfand--Mazur theorem. However, in this case, Theorem \ref{MainThm1} is not very interesting because by \cite[Satz 2, Page 55]{GR_book} every noetherian Banach algebra over $\CC$ has finite dimension as a $\CC$-vector space, and of course every linear transformation between $\CC$-vector spaces of finite dimension is continuous.
\end{ex}

\begin{ex}[Positive characteristic]
It would be interesting to find an analogue of Theorem \ref{MainThm1} in positive characteristic. Here, we show that some modification is needed because as it is stated, Theorem \ref{MainThm1} fails if $\textnormal{char }K=p>0$. Indeed, take $K=\FF_{p}(\!(x)\!)$ to be the field of formal Laurent series with $\FF_{p}$ coefficients and let $L$ be the completed algebraic closure of $K$. Then by \cite[Section 3.4.1, Proposition 3]{BGR} $L$ is algebraically closed, and thus any $K$-linear derivation of $L$ is zero because $L^{p}=L$. On the other hand, $K\subset L$ is not finite (nor even algebraic). Therefore the implication (1)$\implies$(2) of Theorem \ref{MainThm1} cannot hold in positive characteristic.
\end{ex}

\bibliographystyle{plain}
\bibliography{Bibliography}

\end{document}